\numberwithin{equation}{section}
\theoremstyle{plain}
\newtheorem{proposition}{Proposition}[section]
\newtheorem{theorem}{Theorem}[section]
\newtheorem{lemma}{Lemma}[section]
\theoremstyle{definition}
\newtheorem{definition}{Definition}[section]
\theoremstyle{remark}
\newtheorem{rk}{Remark}[section]
\let\expandafter\oldproof\csname\string\proof\endcsname
\let\oldendproof\endproof
\renewenvironment{proof}[1][\proofname]{%
  \oldproof[\noindent\textbf{#1.} ]%
}{\oldendproof}
\newcommand{\E}{\mathbb{E}}
\newcommand{\be}{\begin{equation}}
\newcommand{\ee}{\end{equation}}
\newcommand{\by}{\begin{eqnarray*}}
\newcommand{\ey}{\end{eqnarray*}}
\renewcommand{\leq}{\leqslant}
\renewcommand{\geq}{\geqslant}
\definecolor{dark-red}{rgb}{0.4,0.15,0.15}
\definecolor{dark-blue}{rgb}{0.15,0.15,0.4}
\definecolor{medium-blue}{rgb}{0,0,0.5}
\begin{document}
\title{On hitting time, mixing time and geometric interpretations of Metropolis-Hastings reversiblizations}
\author{Michael C.H. Choi}
\address{Institute for Data and Decision Analytics, The Chinese University of Hong Kong, Shenzhen, Guangdong, 518172, P.R. China.}
\email{michaelchoi@cuhk.edu.cn}
\author{Lu-Jing Huang}
\address{College of Mathematics and Informatics, Fujian Normal University, Fuzhou, Fujian, 350108, P.R. China}
\email{huanglj@fjnu.edu.cn}
\date{\today}
\maketitle


\begin{abstract}
	Given a target distribution $\mu$ and a proposal chain with generator $Q$ on a finite state space, in this paper we study two types of Metropolis-Hastings (MH) generator $M_1(Q,\mu)$ and $M_2(Q,\mu)$ in a continuous-time setting. While $M_1$ is the classical MH generator, we define a new generator $M_2$ that captures the opposite movement of $M_1$ and provide a comprehensive suite of comparison results ranging from hitting time and mixing time to asymptotic variance, large deviations and capacity, which demonstrate that $M_2$ enjoys superior mixing properties than $M_1$. To see that $M_1$ and $M_2$ are natural transformations, we offer an interesting geometric interpretation of $M_1$, $M_2$ and their convex combinations as $\ell^1$ minimizers between $Q$ and the set of $\mu$-reversible generators, extending the results by \cite{BD01}. We provide two examples as illustrations. In the first one we give explicit spectral analysis of $M_1$ and $M_2$ for Metropolised independent sampling, while in the second example we prove a Laplace transform order of the fastest strong stationary time between birth-death $M_1$ and $M_2$.
	\smallskip
	
	\noindent \textbf{AMS 2010 subject classifications}: 60J27, 60J28
	
	\noindent \textbf{Keywords}: Markov chains; Metropolis-Hastings algorithm; additive reversiblization; hitting time; mixing time; asymptotic variance; large deviations
\end{abstract}



\section{Introduction}

In this paper, we study the so-called Metropolis-Hastings reversiblizations in a continuous-time and finite state space setting. This work is largely motivated by \cite{Choi16}, in which the author introduced two Metropolis-Hastings (MH) kernels $M_1$ and $M_2$ to study non-reversible Markov chains in discrete-time. While $M_2$ is a self-adjoint kernel, $M_2$ may not be Markovian, which makes further probabilistic analysis of $M_2$ to be difficult. In a continuous-time setting however, we will show that similar construction for $M_2$ still gives a valid Markov generator, and this observation motivates us to study fine theoretical properties of $M_2$. This paper is therefore devoted to the study of $M_1$ and $M_2$ and offers relevant comparison results between $M_1$, $M_2$ and the proposal chain. It turns out that $M_2$ enjoys superior hitting time and mixing time properties when compared with $M_1$, and so from a Markov chain Monte Carlo perspective, $M_2$ offers acceleration when compared with the classical MH algorithm $M_1$.


The rest of this paper is organized as follow. In Section \ref{sec:M1M2}, we fix our notations and define the two MH generators $M_1$ and $M_2$ that we study throughout our paper. The main results can be found in both Section \ref{sec:geom} and Section \ref{sec:hitmixcompare}. In Section \ref{sec:geom} we provide interesting geometric interpretations of $M_1$, $M_2$ and their convex combinations as $\ell^1$ minimizers between the proposal chain and the set of generator that are reversible with respect to the target distribution. In Section \ref{sec:hitmixcompare} we compare various hitting time and mixing time parameters between $M_1$ and $M_2$. The final section is devoted to two concrete examples. More specifically, in the first example we consider the special case of Metropolised independent sampling in Section \ref{sec:MIS} and offer explicit spectral analysis for $M_1$ and $M_2$, while in the second example in Section \ref{sec:bd} we study birth-death proposal chain that allows for effective comparison of the fastest strong stationary time of $M_1$ and $M_2$.

\section{Metropolis-Hastings kernels: $M_1$ and $M_2$}\label{sec:M1M2}

In this section, we give the construction of continuous-time Metropolis-Hastings (MH) Markov chains. To fix our notation, we let $\mathcal{X}$ be a finite state space and $\mu$ be a target distribution on $\mathcal{X}$. It is perhaps well-known that the classical MH algorithm offers a way to construct a discrete-time Markov chain that is reversible with respect to $\mu$. For pointers on this subject, we refer readers to \cite{MRRTT53,RR04} and the references therein. Here we adapt the basic idea and recast the classical discrete-time MH algorithm to a continuous-time setting so as to construct what we call the first MH Markov chain. We note that similar construction of continuous-time Metropolis-type algorithms can be found in \cite{DM09}.


\begin{definition}[The first MH generator]\label{def:M1}
Given a target distribution $\mu$ on finite state space $\mathcal{X}$ and a proposal continuous-time irreducible Markov chain with generator $Q$, the first MH Markov chain has generator given by $M_1 = M_1(Q,\mu) = (M_1(x,y))_{x,y \in \mathcal{X}}$, where
$$M_1(x,y) := \begin{cases} \min\left\{Q(x,y),\dfrac{\mu(y)}{\mu(x)}Q(y,x)\right\}, &\mbox{if } x \neq y; \\
- \sum_{z: z \neq x} M_1(x,z), & \mbox{if } x = y. \end{cases}$$
\end{definition}

Note that the above definition closely resembles the classical MH algorithm, in which we simply substitute transition probability in the MH algorithm by the transition rate $Q$ of the proposal chain. By mirroring the transition effect of $M_1$ and capturing the opposite movement, we can construct another MH generator, which is what we  call the second MH generator. More precisely, we give a definition for it as follows.

\begin{definition}[The second MH generator]\label{def:M2}
	Given a target distribution $\mu$ on finite state space $\mathcal{X}$ and a proposal continuous-time irreducible Markov chain with generator $Q$, the second MH Markov chain has generator given by $M_2 = M_2(Q,\mu) = (M_2(x,y))_{x,y \in \mathcal{X}}$, where
	$$M_2(x,y) := \begin{cases} \max\left\{Q(x,y),\dfrac{\mu(y)}{\mu(x)}Q(y,x)\right\}, &\mbox{if } x \neq y; \\
	- \sum_{z:z \neq x} M_2(x,z), & \mbox{if } x = y. \end{cases}$$
\end{definition}

Comparing Definition \ref{def:M1} and \ref{def:M2}, we see that in the former we take $\min$ while in the latter we consider $\max$ for off-diagonal entries. It is what we meant by $M_2$ mirroring the transition effect of $M_1$. As another remark, we note that in the discrete-time setting, $M_2$ as defined in \cite{Choi16} may not be a Markov kernel. In the continuous-time setting however, $M_2$ as defined in Definition \ref{def:M2} is a valid Markov generator.

To allow for effective comparison between these generators, we now introduce the notion of Peskun ordering of continuous-time Markov chains. This partial ordering was first introduced by \cite{Pesk73} for discrete time Markov chains on finite state space. It was further generalized by \cite{Tie98} to general state space, and by \cite{LM08} to continuous-time Markov chains.

\begin{definition}[Peskun ordering]
	Suppose that we have two continuous-time Markov chains with generators $Q_1$ and $Q_2$ respectively. Both chains share the same stationary distribution $\pi$. $Q_1$ is said to dominate $Q_2$ off-diagonally, written as $Q_1 \succeq Q_2$, if for all $x \neq y \in \mathcal{X}$, we have
	$$Q_1(x,y) \geq Q_2(x,y).$$
\end{definition}

For a given target distribution $\mu$ and proposal generator $Q$, define the time-reversal generator of $Q$ with respect to $\mu$ as
$$
Q^*(x,y)=\frac{\mu(y)Q(y,x)}{\mu(x)},\quad x,y\in \mathcal{X}.
$$
$Q$ is said to be $\mu$-reversible if and only if $Q=Q^*$. For convenience, let $\bar{Q}=(Q+Q^*)/2$. We also denote the inner product with $\mu$ by $\langle\cdot, \cdot \rangle_\mu$, that is, for any functions $f,\ g:\ \mathcal{X}\rightarrow\mathbb{R}$,
$$
\langle f,g\rangle_\mu=\sum_{x\in \mathcal{X}}f(x)g(x)\mu(x).
$$
In the following, we collect a few elementary observations and results on the behaviour of generators $Q,\ Q^*,\ M_1$ and $M_2$.

\begin{lemma}\label{lem:M1M2}
	Given a target distribution $\mu$ on $\mathcal{X}$ and proposal chain with generator $Q$, then we have
	\begin{enumerate}
		\item $M_1$ and $M_2$ are $\mu$-reversible. \label{it:1}
		\item(Peskun ordering) $M_2 \succeq M_1$. \label{it:2}
		\item \label{it:3} For any function $f:\ \mathcal{X}\rightarrow \mathbb{R}$,
		$$\langle M_2 f,f \rangle_{\mu} \leq \langle M_1 f,f \rangle_{\mu}.$$
	\end{enumerate}
	If we take $\mu = \pi$, the stationary distribution of the proposal chain, then we have
	\begin{enumerate}[resume]
		\item $Q + Q^* = M_1 + M_2$. \label{it:4}
		\item(Peskun ordering) $M_2 \succeq Q \succeq M_1$. \label{it:5}
		\item For any function $f:\ \mathcal{X}\rightarrow \mathbb{R}$,
			  $$\langle M_2 f,f \rangle_{\pi} \leq \langle Q f,f \rangle_{\pi} \leq \langle M_1 f,f \rangle_{\pi}.$$ \label{it:6}
	\end{enumerate}
\end{lemma}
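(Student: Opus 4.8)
The plan is to work from the off-diagonal representations $M_1(x,y) = \min\{Q(x,y), Q^*(x,y)\}$ and $M_2(x,y) = \max\{Q(x,y), Q^*(x,y)\}$ for $x \neq y$, which are just restatements of the definitions once one recognizes that $\tfrac{\mu(y)}{\mu(x)}Q(y,x) = Q^*(x,y)$. Granting these, I would dispatch parts \eqref{it:1}, \eqref{it:2}, \eqref{it:4} and \eqref{it:5} by elementary means. For \eqref{it:1}, I would verify detailed balance directly: pulling $\mu(x)$ inside the minimum gives $\mu(x)M_1(x,y) = \min\{\mu(x)Q(x,y),\,\mu(y)Q(y,x)\}$, which is manifestly symmetric in $x$ and $y$, so $\mu(x)M_1(x,y)=\mu(y)M_1(y,x)$; the identical argument with $\max$ handles $M_2$. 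This simultaneously certifies that both chains have stationary distribution $\mu$, a prerequisite for every Peskun comparison. Part \eqref{it:2} is then immediate from $\max\{a,b\}\geq\min\{a,b\}$ off-diagonally together with the shared stationary distribution.

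For parts \eqref{it:4} and \eqref{it:5}, I would first record that under $\mu=\pi$ the time-reversal $Q^*$ is itself a genuine generator, since $\sum_y Q^*(x,y) = \mu(x)^{-1}(\mu Q)(x) = 0$ precisely when $\mu Q = 0$, i.e. $\mu = \pi$. Part \eqref{it:4} then follows from $\min\{a,b\}+\max\{a,b\}=a+b$ applied off-diagonally, with the diagonal entries matching automatically because all four matrices have zero row sums. Part \eqref{it:5} follows from $\max\{a,b\}\geq a\geq\min\{a,b\}$ off-diagonally, together with the fact that $Q$, $M_1$ and $M_2$ all share the stationary distribution $\pi$.

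Parts \eqref{it:3} and \eqref{it:6} are the analytic core, and the engine for both is the standard Dirichlet-form identity: for any $\mu$-reversible generator $M$,
\[
\langle Mf, f\rangle_\mu = -\frac{1}{2}\sum_{x \neq y}\mu(x)M(x,y)\bigl(f(x)-f(y)\bigr)^2,
\]
which I would establish by expanding $\bigl(f(x)-f(y)\bigr)^2$ and using the zero-row-sum property and detailed balance to annihilate the two diagonal pieces. Part \eqref{it:3} is then a one-line consequence: since $M_2(x,y)\geq M_1(x,y)\geq 0$ off-diagonally by \eqref{it:2} and each $\bigl(f(x)-f(y)\bigr)^2\geq 0$, the overall minus sign reverses the inequality to give $\langle M_2 f,f\rangle_\mu \leq \langle M_1 f,f\rangle_\mu$.

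The main obstacle is part \eqref{it:6}, because $Q$ need not be $\pi$-reversible, so the Dirichlet-form identity does not apply to $Q$ directly. I would circumvent this by symmetrization: since $Q^*$ is the adjoint of $Q$ in $L^2(\pi)$, one has $\langle Qf, f\rangle_\pi = \langle f, Q^* f\rangle_\pi = \langle Q^* f, f\rangle_\pi$, whence $\langle Qf, f\rangle_\pi = \langle \bar{Q}f, f\rangle_\pi$ with $\bar{Q}=(Q+Q^*)/2$ now $\pi$-reversible. The Dirichlet-form identity thus applies to the three reversible generators $M_1$, $\bar{Q}$ and $M_2$, and off-diagonally one has the sandwich $M_1(x,y)=\min\{Q(x,y),Q^*(x,y)\}\leq \bar{Q}(x,y)\leq \max\{Q(x,y),Q^*(x,y)\}=M_2(x,y)$. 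Feeding this through the sign-reversing identity yields $\langle M_2 f,f\rangle_\pi \leq \langle \bar{Q}f,f\rangle_\pi = \langle Qf,f\rangle_\pi \leq \langle M_1 f,f\rangle_\pi$, completing the proof.
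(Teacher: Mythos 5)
Your proof is correct and follows essentially the same route as the paper: detailed balance via the symmetry of $\min\{\mu(x)Q(x,y),\mu(y)Q(y,x)\}$ and $\max\{\cdot,\cdot\}$ for item \eqref{it:1}, elementary $\min$/$\max$ inequalities for items \eqref{it:2}, \eqref{it:4}, \eqref{it:5}, and the symmetrization $\langle Qf,f\rangle_\pi = \langle \bar{Q}f,f\rangle_\pi$ combined with a Dirichlet-form comparison for items \eqref{it:3} and \eqref{it:6}. The only real difference is that where the paper outsources the comparison of Dirichlet forms for Peskun-ordered reversible generators to an external theorem of Leisen and Mira, you prove it from scratch via the identity $\langle Mf,f\rangle_\mu = -\tfrac{1}{2}\sum_{x\neq y}\mu(x)M(x,y)(f(x)-f(y))^2$; this makes your argument self-contained, and your side remark that $Q^*$ has zero row sums precisely when $\mu=\pi$ is a worthwhile sanity check that the paper leaves implicit.
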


\begin{proof}

For item \eqref{it:1}, it is easy to see that for $x \neq y$,
$$
\mu(x)M_2(x,y) = \max\{\mu(x)Q(x,y),\mu(y)Q(y,x)\} = \max\{\mu(y)Q(y,x),\mu(x)Q(x,y)\} = \mu(y)M_2(y,x).
$$
So $M_2$ is $\mu$-reversible. Similarly, the $\mu$-reversibility for $M_1$ can be derived via replacing $\max$ by $\min$ in the above argument.

	Next, we prove item \eqref{it:2}, which trivially holds since
	$$
M_2(x,y) =  \max\left\{Q(x,y),\dfrac{\mu(y)}{\mu(x)}Q(y,x)\right\} \geq  \min\left\{Q(x,y),\dfrac{\mu(y)}{\mu(x)}Q(y,x)\right\} = M_1(x,y).
$$
Item \eqref{it:3} follows readily from \cite[Theorem $5$]{LM08} since both $M_1$ and $M_2$ are $\mu$-reversible. Next, we prove item \eqref{it:4}. We see that
	$$Q(x,y) + Q^*(x,y) = \min\left\{Q(x,y),Q^*(x,y)\right\} + \max\left\{Q(x,y),Q^*(x,y)\right\} = M_1(x,y) + M_2(x,y),$$
if $\mu=\pi$. We proceed to prove item \eqref{it:5}, which follows from
	$$M_2(x,y) = \max\left\{Q(x,y),Q^*(x,y)\right\} \geq Q(x,y) \geq \min\left\{Q(x,y),Q^*(x,y)\right\} = M_1(x,y).$$
	Finally, we prove item \eqref{it:6}. For any function $f$, we see that
	$$\langle Q f,f \rangle_{\pi} = \left\langle \bar{Q} f,f \right\rangle_{\pi}.$$
	As we have $M_2 \succeq \bar{Q} \succeq M_1$ and they are all reversible generators, desired results follow from \cite[Theorem $5$]{LM08}.
\end{proof}

The above lemma will be frequently exploited to develop comparison results in Section \ref{sec:hitmixcompare}.

\section{Geometric interpretation of $M_1$ and $M_2$}\label{sec:geom}

This section is devoted to offer a geometric interpretation for both $M_1$ and $M_2$. Suppose that we are given a target distribution $\mu$ on $\mathcal{X}$ and a proposal chain with generator $Q$ and stationary distribution $\pi$. Our result is largely motivated by the work of \cite{BD01}, who is the first to study geometric consequences of $M_1$ in discrete-time. As we will show in our main result Theorem \ref{thm:geomM1M2} below, it turns out that both $M_1$ and $M_2$ (as well as their convex combinations) minimize certain $\ell^1$ distance to the set of $\mu$-reversible Markov generator on $\mathcal{X}$. As a result, they are natural transformations that maps a given Markov generator to the set of $\mu$-reversible Markov generators.

Let us now fix a few notations and define a metric to quantify the distance between two Markov generators. We write $\mathcal{R}(\mu)$ to be the set of conservative $\mu$-reversible Markov generators and $\mathcal{S}(\mathcal{X})$ to be the set of Markov generator on $\mathcal{X}$. For any $Q_1, Q_2 \in \mathcal{S}(\mathcal{X})$, we define a metric $d_{\mu}$ on $\mathcal{S}(\mathcal{X})$ to be
$$d_{\mu}(Q_1,Q_2) := \sum_{x \in \mathcal{X}} \sum_{y: x \neq y} \mu(x) |Q_1(x,y)-Q_2(x,y)|.$$
To see that $d_{\mu}$ defines a metric, we have $d_{\mu}(Q_1,Q_2) = 0$ implies $Q_1(x,y) = Q_2(x,y)$ for all off-diagonal entries and since each row sums to zero we have $Q_1(x,x) = Q_2(x,x)$ for all $x \in \mathcal{X}$. The distance between $Q$ and $\mathcal{R}(\mu)$ is then defined to be
\begin{align}\label{eq:l1metric}
d_{\mu}(Q,\mathcal{R}(\mu)) := \inf_{M \in \mathcal{R}(\mu)} d_{\mu}(Q,M).
\end{align}
With the above notations in mind, we are now ready to state our main result in this section:

\begin{theorem}\label{thm:geomM1M2}
	The convex combinations $\alpha M_1 + (1-\alpha)M_2$ for $\alpha \in [0,1]$ minimize the distance $d_{\mu}$ between $Q$ and $\mathcal{R}(\mu)$. That is,
	$$d_{\mu}(Q,\mathcal{R}(\mu))= d_{\mu}(Q,\alpha M_1 + (1-\alpha)M_2).$$
	Moreover, $M_1$ (resp.~ $M_2$) is the unique closest element of $\mathcal{R}(\mu)$ that is coordinate-wise no larger (resp.~no smaller) than $Q$ off-diagonally.
\end{theorem}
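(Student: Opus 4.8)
The plan is to reduce the global minimization over $\mathcal{R}(\mu)$ to a family of decoupled one-dimensional problems, one per unordered pair of states. First I would group the two ordered terms in $d_{\mu}(Q,M)$ associated with each edge $\{x,y\}$. For any $\mu$-reversible $M$, detailed balance $\mu(x)M(x,y)=\mu(y)M(y,x)$ means the off-diagonal entries along $\{x,y\}$ are encoded by a single symmetric nonnegative quantity, the edge flow $c_{xy}:=\mu(x)M(x,y)=\mu(y)M(y,x)\geq 0$. Writing $a_{xy}:=\mu(x)Q(x,y)$ and $b_{xy}:=\mu(y)Q(y,x)$, the contribution of $\{x,y\}$ to $d_{\mu}(Q,M)$ becomes $|a_{xy}-c_{xy}|+|b_{xy}-c_{xy}|$, so the whole objective is a sum of such edge terms.

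The key observation is that these edge contributions are completely independent: any family of symmetric nonnegative flows $(c_{xy})$ defines, via $M(x,y)=c_{xy}/\mu(x)$ off-diagonally together with the conservative diagonal, a valid element of $\mathcal{R}(\mu)$, and conversely every element of $\mathcal{R}(\mu)$ arises this way. Hence $d_{\mu}(Q,\mathcal{R}(\mu))$ is obtained by minimizing each term $|a-c|+|b-c|$ separately over $c\geq 0$. This scalar problem is elementary: the minimum value is $|a-b|$, attained precisely when $c\in[\min\{a,b\},\max\{a,b\}]$, and since $a,b\geq 0$ the constraint $c\geq 0$ never binds.

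Next I would locate $M_1$ and $M_2$ in this picture. By Lemma \ref{lem:M1M2} both are $\mu$-reversible, and their flows are $\mu(x)M_1(x,y)=\min\{a_{xy},b_{xy}\}$ and $\mu(x)M_2(x,y)=\max\{a_{xy},b_{xy}\}$, the two endpoints of the optimal interval. For a convex combination the flow equals $\alpha\min\{a_{xy},b_{xy}\}+(1-\alpha)\max\{a_{xy},b_{xy}\}$, which lies in $[\min\{a_{xy},b_{xy}\},\max\{a_{xy},b_{xy}\}]$ for every $\alpha\in[0,1]$; thus $\alpha M_1+(1-\alpha)M_2$ attains the per-edge minimum on every pair and therefore realizes $d_{\mu}(Q,\mathcal{R}(\mu))$. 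One also checks that convex combinations stay in $\mathcal{R}(\mu)$, which is immediate since $\mu$-reversibility, nonnegativity of off-diagonal entries, and vanishing row sums are each preserved under convex combination.

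Finally, for the uniqueness clauses I would impose the side constraint. If $M\in\mathcal{R}(\mu)$ is optimal and satisfies $M(x,y)\leq Q(x,y)$ off-diagonally, then $c_{xy}=\mu(x)M(x,y)\leq a_{xy}$ and likewise $c_{xy}=\mu(y)M(y,x)\leq b_{xy}$, forcing $c_{xy}\leq\min\{a_{xy},b_{xy}\}$; together with the optimality condition $c_{xy}\in[\min\{a_{xy},b_{xy}\},\max\{a_{xy},b_{xy}\}]$ this pins $c_{xy}=\min\{a_{xy},b_{xy}\}$ on every edge, i.e.\ $M=M_1$. The argument for $M_2$ is symmetric with the reversed inequality. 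The only step requiring care is the decoupling itself, namely verifying that minimizing edge-by-edge over the symmetric flows is genuinely unconstrained apart from $c\geq 0$ and that every induced $M$ is a legitimate conservative $\mu$-reversible generator; but this is exactly the bijection between flow families and elements of $\mathcal{R}(\mu)$ noted above, so I expect no real obstacle beyond making that correspondence precise.
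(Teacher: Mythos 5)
Your proof is correct. At its core it rests on the same scalar fact as the paper's argument --- that $c\mapsto|a-c|+|b-c|$ is minimized exactly on $[\min\{a,b\},\max\{a,b\}]$ with value $|a-b|$ --- but you package it differently. The paper fixes an arbitrary $N\in\mathcal{R}(\mu)$, restricts the sum to the half space $H^{<}=\{(x,y):\mu(x)Q(x,y)<\mu(y)Q(y,x)\}$, writes $N(x,y)=Q(x,y)+\epsilon_{xy}$, uses detailed balance to express $N(y,x)$, and applies the reverse triangle inequality to each pair; this yields $d_{\mu}(Q,N)\ge d_{\mu}(Q,M_2)$, then repeats with $H^{>}$ for $M_1$ and verifies the convex combinations by direct computation. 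You instead make the decoupling explicit up front via the bijection between $\mathcal{R}(\mu)$ and symmetric nonnegative edge flows $c_{xy}=\mu(x)M(x,y)=\mu(y)M(y,x)$, which turns the problem into independent one-dimensional $\ell^1$ minimizations. What this buys you is strictly more than the theorem asks: you obtain the \emph{entire} set of minimizers, namely all reversible generators whose flow on each pair lies in $[\min\{a_{xy},b_{xy}\},\max\{a_{xy},b_{xy}\}]$ --- a product of intervals of which the segment $\{\alpha M_1+(1-\alpha)M_2\}$ is only a one-parameter slice --- and the uniqueness clauses drop out immediately from the endpoint characterization rather than from a separate perturbation argument. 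The only point you flag yourself, that every symmetric nonnegative flow family induces a legitimate conservative $\mu$-reversible generator, is indeed routine since $\mathcal{R}(\mu)$ carries no irreducibility requirement in the paper's definition.
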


\begin{rk}
	Taking $\mu = \pi$ in Theorem \ref{thm:geomM1M2}, the stationary distribution of $Q$, $\alpha = 1/2$ and using $Q + Q^* = M_1 + M_2$ by Lemma \ref{lem:M1M2}, we see that
	$$d_{\pi}(Q,\mathcal{R}(\pi)) = d_{\pi}(Q,\bar{Q}).$$
	Thus, the additive reversiblization $\bar{Q}$ is a natural transformation of $Q$ that minimizes the distance $d_{\pi}$ between $Q$ and the set of $\pi$-reversible generators.
\end{rk}

To illustrate this result, we consider the simplest possible case of two-state with $\mathcal{X} = \{0,1\}$ and
$$Q_{(a,b)} = \begin{bmatrix}
-a       & a \\
b       & -b
\end{bmatrix},$$
where $a,b > 0$. Thus this generator $Q_{(a,b)}$ can be parameterized as $(a,b)$ on $\mathcal{S}(\mathcal{X}) = \{Q_{(a,b)};~(a,b) \in \mathbb{R}^+ \times \mathbb{R}^+\}$. The intersection of $\mathcal{S}(\mathcal{X})$ and the line $\mu(1)b - \mu(0)a = 0$ is therefore the set of $\mu$-reversible generator $\mathcal{R}(\mu)$, that is
$\mathcal{R}(\mu) = \{Q_{(a,b)};~\mu(1)b - \mu(0)a = 0, a,b > 0\}$. These are illustrated in Figure \ref{fig:geometric} below.

In Figure \ref{fig:geometric}, there are two points $(a_1,b_1)$ and $(a_2,b_2)$. The former point lies above $\mathcal{R}(\mu)$ while the latter lies below the straight line $\mathcal{R}(\mu)$, and hence they represent two non-reversible Markov chains. We see that $M_1$ projects vertically for $(a_1,b_1)$ and horizontally for $(a_2,b_2)$. On the other hand, $M_2$ mirrors the action of $M_1$ and does the opposite: it projects horizontally for $(a_1,b_1)$ and vertically for $(a_2,b_2)$. In addition, we can compute the distance $d_{\mu}$ explicitly between these generators as in Theorem \ref{thm:geomM1M2}:
\begin{align*}
d_{\mu}(Q_{(a_1,b_1)}, M_1(Q_{(a_1,b_1)},\mu) ) &= |\mu(1) b_1 - \mu(0) a_1|, \\
d_{\mu}(Q_{(a_1,b_1)}, M_2(Q_{(a_1,b_1)},\mu) ) &= |\mu(0) a_1 - \mu(1) b_1|, \\
\end{align*}
and
\begin{align*}
&\quad d_{\mu}(Q_{(a_1,b_1)}, \alpha M_1(Q_{(a_1,b_1)},\mu) + (1-\alpha)M_2(Q_{(a_1,b_1)},\mu) )\\
&= (1-\alpha)|\mu(0) a_1 - \mu(1) b_1| + \alpha |\mu(1) b_1 - \mu(0) a_1| \\
&= |\mu(0) a_1 - \mu(1) b_1|,
\end{align*}
where $\alpha \in [0,1]$. From Theorem \ref{thm:geomM1M2}, they all minimize the distance between $Q_{(a_1,b_1)}$ and $\mathcal{R}(\mu)$.

\begin{figure}
	\includegraphics[width=0.7\linewidth]{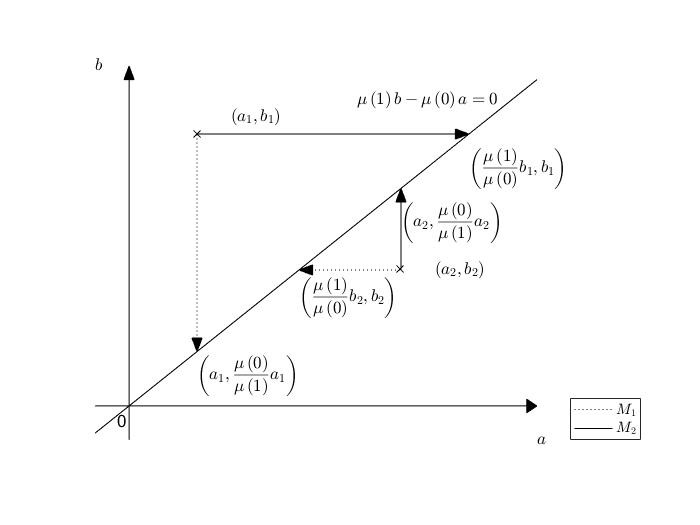}
	\caption{$M_1$ and $M_2$ as $\ell^1$ projections in the $2 \times 2$ case}
	\label{fig:geometric}
\end{figure}

We now proceed to give a proof of Theorem \ref{thm:geomM1M2}.

\begin{proof}[Proof of Theorem \ref{thm:geomM1M2}]
	The proof is inspired by the proof of Theorem $1$ in \cite{BD01}. We first define two helpful half spaces:
	\begin{align*}
		H^{<} = H^{<}(Q,\mu) &:= \big\{(x,y);~\mu(x)Q(x,y) < \mu(y)Q(y,x)\big\}, \\
		H^{>} = H^{>}(Q,\mu) &:= \big\{(x,y);~\mu(x)Q(x,y) > \mu(y)Q(y,x)\big\}.
	\end{align*}
	We now show that for $N \in \mathcal{R}(\mu)$, $d_{\mu}(Q,N) \geq d_{\mu}(Q,M_2)$. First, we note that
	\begin{align*}
	d_{\mu}(Q,N) \geq \sum_{(x,y) \in H^{<}} \big[\mu(x) |Q(x,y) - N(x,y)| + \mu(y) |Q(y,x) - N(y,x)|\big].
	\end{align*}
	As $N$ is $\mu$-reversible, setting $N(x,y) = Q(x,y) + \epsilon_{xy}$ gives $N(y,x) = \frac{\mu(x)}{\mu(y)}(Q(x,y) + \epsilon_{xy})$. Plugging these expressions back yields
	\begin{align*}
		d_{\mu}(Q,N) &\geq \sum_{(x,y) \in H^{<}} \Big[\mu(x) |\epsilon_{xy}| + \mu(y) \left|Q(y,x) - \frac{\mu(x)}{\mu(y)}(Q(x,y) + \epsilon_{xy})\right|\Big] \\
		&=  \sum_{(x,y) \in H^{<}} \big[\mu(x) |\epsilon_{xy}| + \left|\mu(y) Q(y,x) - \mu(x)Q(x,y) - \mu(x) \epsilon_{xy}\right|\big] \\
		&\geq \sum_{(x,y) \in H^{<}} \left|\mu(y) Q(y,x) - \mu(x)Q(x,y) \right| = d_{\mu}(Q,M_2),
	\end{align*}
	where we use the reverse triangle inequality $|a-b| \geq |a| - |b|$ in the second inequality. For uniqueness, if $N$ is off-diagonally no smaller than $Q$, then $\epsilon_{xy} \geq 0$. If anyone is strictly positive, we have $d_{\mu}(Q,N) > d_{\mu}(Q,M_2)$. Alternatively, the uniqueness can be seen by observing that if $N(x,y) > Q(x,y)$ then $N(y,x) > \dfrac{\mu(x)}{\mu(y)}Q(x,y)$. Similarly, we can show $d_{\mu}(Q,N) \geq d_{\mu}(Q,M_1)$ via substituting $H^{<}$ by $H^{>}$. To see that $d_{\mu}(Q,M_1) = d_{\mu}(Q,M_2)$, we have
	\begin{align*}
	d_{\mu}(Q,M_2) &= \sum_{(x,y) \in H^{<}} \left|\mu(y) Q(y,x) - \mu(x)Q(x,y) \right| \\
	&= \sum_{(y,x) \in H^{>}} \left|\mu(y) Q(y,x) - \mu(x)Q(x,y) \right| = d_{\mu}(Q,M_1).
	\end{align*}
	As for convex combinations of $M_1$ and $M_2$, we see that
	\begin{align*}
	d_{\mu}(Q,\alpha M_1 + (1-\alpha)M_2) &= (1-\alpha)\sum_{(x,y) \in H^{<}} \left|\mu(y) Q(y,x) - \mu(x)Q(x,y) \right| \\
	&\quad + \alpha \sum_{(x,y) \in H^{>}} \left|\mu(y) Q(y,x) - \mu(x)Q(x,y) \right|\\
	&= (1-\alpha) d_{\mu}(Q,M_2) + \alpha d_{\mu}(Q,M_1) = d_{\mu}(Q,M_1).
	\end{align*}
\end{proof}

\section{Performance comparisons between $\bar{Q}$, $M_1$ and $M_2$}\label{sec:hitmixcompare}

The evaluation of the performance of Markov chains depends on the comparison criterion. Popular comparison criteria that have appeared in the literature include mixing time and hitting times, spectral gap, asymptotic variance and large deviations, see e.g. \cite{Pesk73,RR97,CLP99,DHN00,GM00,SGS10,CH13,Bie16,HM17,HM18,FHY92,CCHP12,Hwang05} and references therein. In this section we give some comparison theorems based on these parameters for chains $\bar{Q}$, $M_1$ and $M_2$. Recall our setting that we are given a proposal irreducible chain $X = (X_t)_{t \geq 0}$ with generator $Q$, transition semigroup $(P_t)_{t \geq 0}$ and stationary distribution $\pi$. We are primarily interested in the behaviour of the following parameters:

\begin{itemize}
	\item(Hitting times) We write
\begin{align*}
\tau_{A} &= \tau_{A}(Q) := \inf\{t \geq 0; X_t \in A\},\\
\tau_{A}^+& = \tau_{A}^+(Q) := \inf\{t > 0; X_t \in A \text{ and there exists } s \in (0,t) \text{ such that } X_s \neq X_0\}
\end{align*}
to be the first hitting time and the first return time to the set $A \subseteq \mathcal{X}$ of chain $X$ respectively, and the usual convention of $\inf \emptyset = \infty$ applies. We also adapt the common notation that $\tau_y := \tau_{\{y\}}$ (resp.~$\tau_y^+ := \tau_{\{y\}}^+$) for $y \in \mathcal{X}$. The commute time $t_{com}^{x,y}$ between two states $x,\ y$ is
	$$t_{com}^{x,y} = t_{com}^{x,y}(Q) := \E_x(\tau_y(Q)) + \E_y(\tau_x(Q)).$$
	Another hitting time parameter of interest is the average hitting time $t_{av}$, which is defined to be
	$$t_{av} = t_{av}(Q,\pi) := \sum_{x,y} \mathbb{E}_x(\tau_y) \pi(x)\pi(y).$$
	In fact, $t_{av}$ equals to the sum of the reciprocals of the non-zero eigenvalues of $-Q$, and it also has close connection with the notion of strong ergodicity, see for example \cite{Mao04,CuiMao10}.
	
	\item(Total variation mixing time) For $\epsilon > 0$, we write the total variation mixing time $t_{mix}(\epsilon)$ to be
	$$t_{mix}(\epsilon) = t_{mix}(Q,\pi,\epsilon) := \inf\big\{t \geq 0; \sup_x ||P_t(x,\cdot) - \pi||_{TV} < \epsilon\big\},$$
	where for any probability measure $\nu$ and $\pi$ on $\mathcal{X}$, $||\nu - \pi||_{TV} := \frac{1}{2} \sum_x |\nu(x) - \pi(x)|$ is the total variation distance between these two measures. A commonly used metric is $t_{mix}(1/4)$ where we take $\epsilon = 1/4$.
	
	\item(Spectral gap) Denote the spectral gap of $Q$ to be
	$$\lambda_2 = \lambda_2(Q,\pi) := \inf\big\{\langle -Qf,f \rangle_{\pi}:\ \pi(f) = 0, \pi(f^2) = 1\big\}.$$
	The relaxation time $t_{rel}$ is then the reciprocal of $\lambda_2$, that is,
	$$t_{rel} = t_{rel}(Q,\pi) := \dfrac{1}{\lambda_2}.$$
	Note that in the finite state space setting, $\lambda_2$ is the second smallest eigenvalue of $-\bar{Q}$.
	
	\item(Asymptotic variance) For a mean zero function $f$, i.e., $\pi(f)=0$, the central limit theorem for Markov processes \cite[Theorem $2.7$]{KLO12} gives $t^{-1/2} \int_0^t f(X_s)ds$ converges in probability to a mean zero Gaussian distribution with variance
	$$\sigma^2(f,Q,\pi) := -2 \langle f,g \rangle_{\pi},$$
	where $g$ solves the Poisson equation $Qg = f$.

    \item(Large deviation) Let the occupation measure of the Markov chain $X$ be
    $$
    L_t=\frac{1}{t}\int_{0}^{t}\delta_{X_s} ds,
    $$
    and the rate function be
    $$
    I(Q,\nu)=\sup_{u>0}\Big(-\sum_{x\in \mathcal{X}}\nu(x)\frac{Qu(x)}{u(x)}\Big),\quad \nu\in \mathcal{P}(\mathcal{X}),
    $$
    where $\mathcal{P}(\mathcal{X})$ is the set of probability distributions on $\mathcal{X}$. It follows from the large deviation principle that for large $t$ and $A\in \mathcal{P}(\mathcal{X})$,
    $$
    \mathbb{P}(L_t\in A)\approx \text{exp}\left(-t\inf_{\nu\in A}I(Q,\nu)\right).
    $$
	We refer readers to \cite{Ho00} for further references on the subject of large deviations of Markov chains.
	
	\item(Capacity) For any disjoint subset $A,B$ of $\mathcal{X}$, we define the capacity between $A$ and $B$ to be
	$$\mathrm{cap}(A,B) = \mathrm{cap}(A,B,Q,\pi) := \sum_{x \in A} \pi(x) \mathbb{P}_x(\tau_A^+ > \tau_B^+).$$
	If $Q$ is reversible with respect to $\pi$, the classical Dirichlet principle for capacity gives
	$$\mathrm{cap}(A,B) = \inf\{\langle -Qf,f \rangle_{\pi}:\ f|_{A} = 1, f|_{B} = 0\}.$$
	Note that in \cite{Do94} and \cite{GL14}, the authors derived the Dirichlet principle for non-reversible Markov chains.

\end{itemize}

With the above parameters in mind, we are now ready to state our first comparison result between $M_1$ and $M_2$:
\begin{theorem}[Comparison theorem between $M_1(Q,\mu)$ and $M_2(Q,\mu)$]\label{thm:compareM1QmuM2Qmu}
	Given a target distribution $\mu$ on finite state space $\mathcal{X}$ and proposal irreducible chain with generator $Q$, we have the following comparison results between $M_1 = M_1(Q,\mu)$ and $M_2 = M_2(Q,\mu)$:
	\begin{enumerate}
		\item(Hitting times)\label{it:hit} For $\lambda > 0$ and $A \subseteq \mathcal{X}$, we have
		$$\mathbb{E}_{\mu}(e^{-\lambda \tau_A(M_1)}) \leq \mathbb{E}_{\mu}(e^{-\lambda \tau_A(M_2)}).$$
		In particular, $\mathbb{E}_{\mu}(\tau_A(M_1)) \geq \mathbb{E}_{\mu}(\tau_A(M_2))$. Furthermore, $t_{av}(M_1,\mu) \geq t_{av}(M_2,\mu)$.
		
		\item(Total variation mixing time)\label{it:tmix} There exists a positive constant $C_{\mu}$ that depends on $\mu$ such that
		$$t_{mix}(M_2,\mu,1/4) \leq C_{\mu}t_{mix}(M_1,\mu,1/4).$$
		That is, $t_{mix}(M_2,\mu,1/4) \lesssim_{\mu} t_{mix}(M_1,\mu,1/4).$

      \item(Spectral gap)\label{it:l2} We have $\lambda_2(M_1,\mu)\leq \lambda_2(M_2,\mu)$. That is, the exponential $\ell^2$-convergence rate of chain $M_2$ is faster than that of chain $M_1$, or $t_{rel}(M_1,\mu)  \geq t_{rel}(M_2,\mu).$

      \item(Asymptotic variance)\label{it:asymvar} For $h \in \ell^2_0(\mu) = \{h;~\mu(h) = 0\}$,
		$$\sigma^2(h,M_1,\mu) \geq \sigma^2(h,M_2,\mu).$$

      \item(Large deviations)\label{it:larde} For any $\nu \in \mathcal{P}(\mathcal{X})$, $I(M_1,\nu)\leq I(M_2,\nu)$. That is, the deviations for chain $M_2$ from the invariant distribution are asymptotically less likely than for $M_1$.
		
	  \item(Capacity)\label{it:cap} For any disjoint $A,B \subseteq \mathcal{X}$,
		$$\mathrm{cap}(A,B,M_1,\mu) \leq \mathrm{cap}(A,B,M_2,\mu).$$
      In particular, if we take $A=\{x\}$ and $B=\{y\}$, we have
	  $$
      t_{com}^{x,y}(M_1) \geq t_{com}^{x,y}(M_2).
      $$
		
	\end{enumerate}
\end{theorem}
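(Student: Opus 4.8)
The unifying observation is that every one of the six comparisons is a consequence of the two orderings recorded in Lemma \ref{lem:M1M2}: the Peskun ordering $M_2\succeq M_1$ from item \eqref{it:2}, and the equivalent quadratic-form (Dirichlet form) inequality $\langle -M_2 f,f\rangle_\mu \ge \langle -M_1 f,f\rangle_\mu$ for every $f$, from item \eqref{it:3}, together with the fact from item \eqref{it:1} that both $M_1$ and $M_2$ are $\mu$-reversible. The plan is therefore to express each performance parameter through a variational or spectral functional of the Dirichlet form and then let the pointwise-in-$f$ inequality pass to the relevant infimum, supremum, or operator inverse.

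Several of the items fall out almost immediately. For the spectral gap \eqref{it:l2}, I would insert the quadratic-form inequality directly into the Rayleigh quotient $\lambda_2(M,\mu)=\inf\{\langle -Mf,f\rangle_\mu:\ \mu(f)=0,\ \mu(f^2)=1\}$; since the constraint set is identical for both chains and the integrand is pointwise larger for $M_2$, taking the infimum gives $\lambda_2(M_2,\mu)\ge\lambda_2(M_1,\mu)$, equivalently $t_{rel}(M_1,\mu)\ge t_{rel}(M_2,\mu)$. The total variation mixing time \eqref{it:tmix} then follows up to a $\mu$-dependent constant by sandwiching $t_{mix}$ between multiples of $t_{rel}$ through the standard reversible-chain bounds $c_1\, t_{rel}\le t_{mix}(1/4)\le c_2(\mu)\, t_{rel}$, so that $t_{mix}(M_2,\mu,1/4)\lesssim t_{rel}(M_2,\mu)\le t_{rel}(M_1,\mu)\lesssim_\mu t_{mix}(M_1,\mu,1/4)$. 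For the capacity \eqref{it:cap}, I would use that both chains are $\mu$-reversible so the Dirichlet principle $\mathrm{cap}(A,B,M,\mu)=\inf\{\langle -Mf,f\rangle_\mu:\ f|_A=1,\ f|_B=0\}$ applies; the feasible set of $f$ is again the same, so the form inequality yields $\mathrm{cap}(A,B,M_1,\mu)\le\mathrm{cap}(A,B,M_2,\mu)$. For the large deviations \eqref{it:larde}, the key is to replace the Donsker--Varadhan expression by its reversible reduction: for $\mu$-reversible $M$ one has $I(M,\nu)=\langle -M\sqrt{\varphi},\sqrt{\varphi}\rangle_\mu$ with $\varphi=d\nu/d\mu$, at which point evaluating item \eqref{it:3} at $f=\sqrt{\varphi}$ gives $I(M_1,\nu)\le I(M_2,\nu)$. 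I would note that the naive route of comparing $-Mu(x)/u(x)$ pointwise fails because the sign of $u(x)-u(y)$ is not controlled, which is exactly why passing to the reversible representation is needed.

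The hitting-time statement \eqref{it:hit} is where the real work lies, and I expect it to be the main obstacle. I would start from the identity $\mathbb{E}_\mu(e^{-\lambda\tau_A})=1-\lambda\int_0^\infty e^{-\lambda t}\mathbb{P}_\mu(\tau_A>t)\,dt$ and rewrite the survival integral as a resolvent of the sub-Markovian generator $M^{A^c}$ obtained by restricting $M$ to $A^c$, giving $\int_0^\infty e^{-\lambda t}\mathbb{P}_\mu(\tau_A>t)\,dt=\langle \1,(\lambda-M^{A^c})^{-1}\1\rangle_\mu$ on $A^c$. The restricted generators remain $\mu$-reversible and inherit the form inequality $\langle -M_2^{A^c}f,f\rangle_\mu\ge\langle -M_1^{A^c}f,f\rangle_\mu$ for $f$ supported on $A^c$, so that $\lambda-M_2^{A^c}\ge\lambda-M_1^{A^c}$ as positive definite operators; the delicate point is to invoke operator anti-monotonicity of inversion to deduce $(\lambda-M_2^{A^c})^{-1}\le(\lambda-M_1^{A^c})^{-1}$ and hence the ordering of the resolvent quadratic forms. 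Multiplying by $-\lambda$ and adding $1$ reverses the inequality to give $\mathbb{E}_\mu(e^{-\lambda\tau_A(M_1)})\le\mathbb{E}_\mu(e^{-\lambda\tau_A(M_2)})$; letting $\lambda\to0^+$ (equivalently integrating survival probabilities) yields $\mathbb{E}_\mu(\tau_A(M_1))\ge\mathbb{E}_\mu(\tau_A(M_2))$, and summing $\mathbb{E}_\mu(\tau_y)$ against $\mu(y)$ over $y$, using $t_{av}(M,\mu)=\sum_y\mu(y)\mathbb{E}_\mu(\tau_y(M))$, gives $t_{av}(M_1,\mu)\ge t_{av}(M_2,\mu)$.

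For the asymptotic variance \eqref{it:asymvar} I would use the reversible representation $\sigma^2(h,M,\mu)=2\langle h,(-M)^{-1}h\rangle_\mu$ on $\ell^2_0(\mu)$, where $-M$ is positive definite on mean-zero functions by irreducibility; the same operator anti-monotonicity of inversion used for the hitting times turns $-M_2\ge -M_1$ into $(-M_2)^{-1}\le(-M_1)^{-1}$, whence $\sigma^2(h,M_2,\mu)\le\sigma^2(h,M_1,\mu)$. Finally, the commute-time assertion in \eqref{it:cap} follows from the capacity comparison together with the reversible effective-resistance identity $t_{com}^{x,y}(M)=1/\mathrm{cap}(\{x\},\{y\},M,\mu)$: since $M_1$ and $M_2$ share the stationary distribution $\mu$, this identity holds with the same normalization for both, so inverting $\mathrm{cap}(\{x\},\{y\},M_1,\mu)\le\mathrm{cap}(\{x\},\{y\},M_2,\mu)$ gives $t_{com}^{x,y}(M_1)\ge t_{com}^{x,y}(M_2)$. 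The recurring ingredient to be justified carefully is operator monotonicity of the inverse on the appropriate positive-definite subspaces, and for \eqref{it:hit} the additional step that the Dirichlet-form ordering descends to the killed generators on $A^c$.
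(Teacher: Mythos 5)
Your proposal is correct, and its overall architecture is the same as the paper's: every item is reduced to the orderings recorded in Lemma \ref{lem:M1M2}, and your arguments for the spectral gap \eqref{it:l2}, the large deviations \eqref{it:larde} and the capacity half of \eqref{it:cap} (Rayleigh quotient, reversible Donsker--Varadhan form evaluated at $\sqrt{d\nu/d\mu}$, Dirichlet principle) are exactly those in the paper. The differences are local. For the hitting times \eqref{it:hit} and the asymptotic variance \eqref{it:asymvar} the paper simply cites \cite[Theorem $3.1$]{HM18} and \cite[Theorem $6$]{LM08}; your resolvent representations $\langle \1,(\lambda-M^{A^c})^{-1}\1\rangle_{\mu}$ and $2\langle h,(-M)^{-1}h\rangle_{\mu}$, combined with anti-monotonicity of inversion for positive-definite $\mu$-self-adjoint operators, reprove those black boxes from scratch, and you correctly isolate the one point needing care, namely that the Dirichlet-form ordering descends to the killed generators (extend $f$ by zero on $A$). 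The genuinely different step is the mixing time \eqref{it:tmix}: the paper goes through Oliveira's equivalence $t_{mix}(1/4)\asymp\sup_{x,A:\mu(A)\geq 1/4}\mathbb{E}_x(\tau_A)$ from \cite[Theorem $1.3$]{Oliveria12} applied to the hitting-time comparison, paying a factor $1/\mu_{\min}$ to pass from worst-case to stationary start, whereas you sandwich $t_{mix}(1/4)$ between $(\log 2)\,t_{rel}$ and $\log(4/\mu_{\min})\,t_{rel}$ and invoke item \eqref{it:l2}; your route is shorter, uses only standard reversible-chain bounds, and yields a constant $C_{\mu}$ of order $\log(1/\mu_{\min})$ rather than $1/\mu_{\min}$. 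For the commute time you invert the Dirichlet-form capacity via $t_{com}^{x,y}=1/\mathrm{cap}(\{x\},\{y\})$, while the paper works with the probabilistic capacity $\mu(x)\mathbb{P}_x(\tau_y<\tau_x^+)$ and the holding rates $|M(x,x)|$ through \cite[Chapter $2$ Corollary $8$]{AF14}; the two bookkeepings agree because in continuous time the probabilistic and Dirichlet-form capacities differ by exactly the factor $|M(x,x)|$, so your shortcut is valid provided the Dirichlet-form normalization is used consistently on both sides.
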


Theorem \ref{thm:compareM1QmuM2Qmu} shows that $M_2$ has superior mixing properties than $M_1$ in almost all aspects: $M_2$ has smaller mean hitting time, average hitting time, commute time, total variation mixing time, relaxation time and asymptotic variance and larger rate function and capacity between any two disjoint sets. As a result, it seems to suggest that for Markov chain Monte Carlo purpose one should use $M_2$ whenever possible since it is faster than its classical Metropolis-Hastings counterpart $M_1$. In Section \ref{sec:MIS}, we offer an explicit spectral analysis for both $M_1$ and $M_2$ in the Metropolised independent sampling setting.

In the next result, we take $\mu = \pi$, the stationary distribution of the proposal chain $Q$ and offer comparison results between $M_1, M_2$ and $\bar{Q}$. Recall that all these generators minimize the distance between $Q$ and $\mathcal{R}(\pi)$ in Theorem \ref{thm:geomM1M2}. They however behave differently based on different parameters:

\begin{theorem}[Comparison theorem between $M_1$, $M_2$ and $\bar{Q}$]\label{thm:compareM1M2additive}
	Given a target distribution $\pi$ on finite state space $\mathcal{X}$ and a proposal chain with generator $Q$. If $\pi$ is the stationary distribution of chain $Q$, then we have the following comparison results between $M_1 = M_1(Q,\pi)$, $M_2 = M_2(Q,\pi)$ and $\bar{Q} = (Q+Q^*)/2$:
	\begin{enumerate}
		\item(Hitting times)\label{hit} For $\lambda > 0$ and $A \subseteq \mathcal{X}$, we have
		$$
		\mathbb{E}_{\pi}(e^{-\lambda \tau_A(M_1)}) \leq \mathbb{E}_{\pi}(e^{-\lambda \tau_A(\bar{Q})}) \leq \min\big\{\mathbb{E}_{\pi}(e^{-\lambda \tau_A(Q)}),\ \mathbb{E}_{\pi}(e^{-\lambda \tau_A(M_2)})\big\}.
        $$
		In particular, for any $A\subseteq\mathcal{X}$,
      $$
      \mathbb{E}_{\pi}(\tau_A(M_1)) \geq \mathbb{E}_{\pi}(\tau_A(\bar{Q})) \geq \max\big\{\mathbb{E}_{\pi}(\tau_A(Q)),\ \mathbb{E}_{\pi}(\tau_A(M_2))\big\}.
      $$
       Furthermore,
		$$
		t_{av}(M_1,\pi) \geq t_{av}(\bar{Q},\pi)\geq \max\big\{t_{av}(Q,\pi),\  t_{av}(M_2,\pi)\big\}.
		$$
		
		\item(Total variation mixing time) There exists positive constants $C_{\pi}^{(1)}, C_{\pi}^{(2)}$ that depend on $\pi$ such that
		$$t_{mix}(M_2,\pi,1/4) \leq C_{\pi}^{(1)}t_{mix}(\bar{Q},\pi,1/4) \leq C_{\pi}^{(2)}t_{mix}(M_1,\pi,1/4).$$
		That is, $t_{mix}(M_2,\pi,1/4) \lesssim_{\pi} t_{mix}(\bar{Q},\pi,1/4) \lesssim_{\pi} t_{mix}(M_1,\pi,1/4).$

      \item(Spectral gap) We have
      $$
      \lambda_2(M_1,\pi)\leq \lambda_2(\bar{Q},\pi)\leq \lambda_2(M_2,\pi).
      $$
      That is, $t_{rel}(M_1,\pi)\geq t_{rel}(\bar{Q},\pi)\geq t_{rel}(M_2,\pi).$

      \item(Asymptotic variance)\label{asva} For $h \in \ell^2_0(\pi) = \{h;~\pi(h)=0\}$,
		$$
		\sigma^2(h,M_1,\pi) \geq \sigma^2(h,\bar{Q},\pi) \geq \max\big\{\sigma^2(h,Q,\pi),\ \sigma^2(h,M_2,\pi)\big\}.
		$$

      \item(Large deviations)\label{lade} For any $\nu\in\mathcal{P}(\mathcal{X})$,
          $$
          I(M_1,\nu)\leq I(\bar{Q},\nu)\leq \min\big\{I(Q,\mu),\ I(M_2,\nu)\big\}.
          $$
		
		\item(Capacity)\label{capa} For any disjoint $A,B \subseteq \mathcal{X}$,
		$$
        \mathrm{cap}(A,B,M_1,\pi) \leq \mathrm{cap}(A,B,\bar{Q},\pi) \leq \min\big\{\mathrm{cap}(A,B,Q,\pi),\ \mathrm{cap}(A,B,M_2,\pi)\big\}.
        $$
      In particular, if we take $A=\{x\}$ and $B=\{y\}$, we have
		$$
        t_{com}^{x,y}(M_1) \geq t_{com}^{x,y}(\bar{Q}) \geq \max\big\{t_{com}^{x,y}(Q),\  t_{com}^{x,y}(M_2)\big\}.
        $$

	\end{enumerate}
\end{theorem}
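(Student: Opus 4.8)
The plan is to split the six comparisons into two regimes: the inequalities \emph{among} the three $\pi$-reversible generators $M_1,\bar{Q},M_2$, and the inequalities \emph{between} the non-reversible proposal $Q$ and its reversiblization $\bar{Q}$. First I would record the off-diagonal Peskun ordering $M_2\succeq\bar{Q}\succeq M_1$, which is immediate from Lemma~\ref{lem:M1M2}: for $\mu=\pi$ and $x\ne y$, $M_2(x,y)=\max\{Q(x,y),Q^*(x,y)\}\ge \tfrac12(Q(x,y)+Q^*(x,y))=\bar{Q}(x,y)\ge\min\{Q(x,y),Q^*(x,y)\}=M_1(x,y)$, and $\bar{Q}$ is $\pi$-reversible by construction. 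Since $M_1,\bar{Q},M_2$ are all $\pi$-reversible and Peskun-ordered, every comparison among them follows \emph{verbatim} from the arguments of Theorem~\ref{thm:compareM1QmuM2Qmu}: the Dirichlet-form ordering $\langle -M_2f,f\rangle_\pi\ge\langle -\bar{Q}f,f\rangle_\pi\ge\langle -M_1f,f\rangle_\pi$ from Lemma~\ref{lem:M1M2}\eqref{it:6} and \cite{LM08} feeds the Courant--Fischer characterization of $\lambda_2$, the resolvent formula $\sigma^2(h,N,\pi)=2\langle h,(-N)^{-1}h\rangle_\pi$ together with operator monotonicity of inversion, the trace identity $t_{av}(N)=\mathrm{tr}((-N)^{-1}|_{\ell^2_0(\pi)})$, the reversible Dirichlet principle for capacity, the Donsker--Varadhan formula for $I$, and the hitting-time Laplace-transform order; the mixing-time bound is inherited unchanged.

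\textbf{The crux: $Q$ versus $\bar{Q}$.} Here $Q$ is non-reversible and \emph{no} Peskun ordering is available, since $\bar{Q}$ is in general neither off-diagonally above nor below $Q$. The single structural fact driving all these inequalities is that $Q$ and $\bar{Q}$ share the same symmetric Dirichlet form, $\langle -Qf,f\rangle_\pi=\langle -\bar{Q}f,f\rangle_\pi$ for every $f$, because the $\pi$-antisymmetric part contributes nothing to a quadratic form. The spectral gap is then literally equal, $\lambda_2(Q,\pi)=\lambda_2(\bar{Q},\pi)$, as $\lambda_2$ is defined through $\bar{Q}$. For the asymptotic variance I would write $S=-\bar{Q}$ and $-Q=S-A$ with $A$ the $\pi$-antisymmetric part, so that $\sigma^2(h,Q,\pi)=2\langle h,(S-A)^{-1}h\rangle_\pi$; setting $u=(S-A)^{-1}h$ gives $\langle h,(S-A)^{-1}h\rangle_\pi=\langle Su,u\rangle_\pi=\Phi(u)\le\sup_w\Phi(w)=\langle h,S^{-1}h\rangle_\pi$ for $\Phi(w)=2\langle h,w\rangle_\pi-\langle Sw,w\rangle_\pi$, yielding $\sigma^2(h,Q,\pi)\le\sigma^2(h,\bar{Q},\pi)$; summing this inequality over a $\pi$-orthonormal basis of $\ell^2_0(\pi)$ then gives $t_{av}(Q,\pi)\le t_{av}(\bar{Q},\pi)$.

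\textbf{Hitting times, capacity, large deviations.} For $\tau_A$ I would run a Dirichlet-principle argument built on the shared symmetric form. Put $g_N=\mathbf{1}-\mathbb{E}_\cdot(e^{-\lambda\tau_A(N)})$, which vanishes on $A$ and solves $(\lambda-N)g_N=\lambda$ on $A^c$. The functional $J(f)=\langle(\lambda-\bar{Q})f,f\rangle_\pi-2\lambda\langle f,\mathbf{1}\rangle_\pi$ on $\{f:f|_A=0\}$ is strictly convex (as $\lambda>0$ and $\langle -\bar{Q}f,f\rangle_\pi\ge 0$) and minimized at $g_{\bar{Q}}$; using $\langle(\lambda-Q)f,f\rangle_\pi=\langle(\lambda-\bar{Q})f,f\rangle_\pi$ one computes $J(g_Q)=-\lambda\langle g_Q,\mathbf{1}\rangle_\pi\ge J(g_{\bar{Q}})=-\lambda\langle g_{\bar{Q}},\mathbf{1}\rangle_\pi$, so $\langle g_Q,\mathbf{1}\rangle_\pi\le\langle g_{\bar{Q}},\mathbf{1}\rangle_\pi$, i.e. $\mathbb{E}_\pi(e^{-\lambda\tau_A(\bar{Q})})\le\mathbb{E}_\pi(e^{-\lambda\tau_A(Q)})$, with the mean-time statement following as $\lambda\to 0$. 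The same idea handles capacity: by the probabilistic representation $\mathrm{cap}(A,B,Q,\pi)=\langle -Qh_Q,h_Q\rangle_\pi$ for the non-reversible equilibrium potential $h_Q$ (\cite{GL14,Do94}), the identity $\langle -Qh_Q,h_Q\rangle_\pi=\langle -\bar{Q}h_Q,h_Q\rangle_\pi$ shows $h_Q$ is an admissible competitor in the reversible Dirichlet principle for $\bar{Q}$, whence $\mathrm{cap}(A,B,Q,\pi)\ge\mathrm{cap}(A,B,\bar{Q},\pi)$; specializing to singletons and invoking the capacity--commute-time identity gives $t_{com}^{x,y}(Q)\le t_{com}^{x,y}(\bar{Q})$. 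For large deviations I would instead argue by convexity: $I(Q,\nu)=\sup_{u>0}(-\sum_x\nu(x)Qu(x)/u(x))$ is a supremum of functions linear in $Q$, hence convex in $Q$, and $I(Q,\nu)=I(Q^*,\nu)$ by time-reversal invariance of the occupation-measure rate function, so $I(\bar{Q},\nu)=I(\tfrac12(Q+Q^*),\nu)\le\tfrac12 I(Q,\nu)+\tfrac12 I(Q^*,\nu)=I(Q,\nu)$.

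\textbf{Main obstacle.} The difficulty lies entirely in this $Q$-versus-$\bar{Q}$ layer, where the reversible toolkit (Peskun ordering, Courant--Fischer, reversible Dirichlet principle) does not apply and every functional must be routed through the identity $\langle -Qf,f\rangle_\pi=\langle -\bar{Q}f,f\rangle_\pi$. The most delicate points are the hitting-time Laplace transform and the capacity, where one must verify that the non-reversible solution (equilibrium potential) is feasible in the reversible variational problem whose objective sees only the symmetric part; the large-deviation bound is the one place the symmetric-form identity alone is insufficient, and the convexity-plus-time-reversal argument is required. Finally, as in Theorem~\ref{thm:compareM1QmuM2Qmu}, the mixing-time comparison holds only up to $\pi$-dependent constants via $t_{mix}\asymp t_{rel}\log(1/\pi_{\min})$, which is why the constants $C_\pi^{(1)},C_\pi^{(2)}$ appear.
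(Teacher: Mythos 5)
Your proposal is correct and, for the comparisons among the three $\pi$-reversible generators $M_1$, $\bar{Q}$, $M_2$, it coincides with the paper: both reduce to the Peskun ordering $M_2\succeq\bar{Q}\succeq M_1$ and rerun the proof of Theorem \ref{thm:compareM1QmuM2Qmu} with $\mu=\pi$. Where you genuinely diverge is in the $Q$-versus-$\bar{Q}$ layer. The paper handles this layer almost entirely by citation: \cite[Theorem 3.3]{HM18} for the hitting-time Laplace transform, \cite[Theorem 4.3]{HM18.2} (transplanted to continuous time) for the asymptotic variance, \cite[Proposition 3.2]{Bie16} for large deviations, and the non-reversible sup--inf Dirichlet principle of \cite[Lemma 3.2]{GL14} with the test choice $g=-f$ for capacity. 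You instead give self-contained variational arguments all routed through the single identity $\langle -Qf,f\rangle_\pi=\langle -\bar{Q}f,f\rangle_\pi$: your resolvent computation $\langle h,(S-A)^{-1}h\rangle_\pi=\langle Su,u\rangle_\pi=\Phi(u)\le\sup_w\Phi(w)$ is essentially the content of the cited asymptotic-variance result; your strictly convex functional $J$ with the solutions $g_Q,g_{\bar Q}$ as competitors reproves the cited hitting-time comparison; your capacity argument (the non-reversible equilibrium potential $h_Q$ is admissible for the reversible Dirichlet principle of $\bar{Q}$ and achieves $\langle -Qh_Q,h_Q\rangle_\pi$) reaches the same inequality as the paper's $g=-f$ substitution by a cleaner route; and your large-deviations argument via convexity of $Q\mapsto I(Q,\nu)$ plus the time-reversal invariance $I(Q,\nu)=I(Q^*,\nu)$ (which holds via the substitution $\tilde u=\nu/(\pi u)$) is a genuinely different and arguably more transparent proof than invoking \cite{Bie16}. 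Your trace/orthonormal-basis derivation of the $t_{av}$ ordering is also an alternative to the paper's implicit route of summing the singleton hitting-time inequalities against $\pi$. The only cosmetic inaccuracy is your closing remark attributing the constants $C_\pi^{(1)},C_\pi^{(2)}$ to a $t_{rel}\log(1/\pi_{\min})$ bound; in the paper they arise from Oliveira's hitting-time characterization of $t_{mix}$ together with the factor $1/\mu_{\min}$, but this does not affect the validity of the argument you inherit from Theorem \ref{thm:compareM1QmuM2Qmu}.
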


\begin{rk}
	In Theorem \ref{thm:compareM1M2additive}, we provide a comparison theorem between between $M_1$, $M_2$ and $\bar{Q}$ based on different parameters. We believe that similar results should hold between $Q$ and $M_2$, and conjecture that $M_2$ should mix faster than $Q$ simply because we are taking the maximum between $Q$ and $Q^*$ for off-diagonal entries. We are not able to prove it however due to the non-reversibility of $Q$.
\end{rk}

The rest of this section is devoted to the proof of Theorem \ref{thm:compareM1QmuM2Qmu} and Theorem \ref{thm:compareM1M2additive}.

\subsection{Proof of Theorem \ref{thm:compareM1QmuM2Qmu}}

Many of our results follow from the Peskun ordering between $M_1$ and $M_2$ as well as Lemma \ref{lem:M1M2}. We first prove item \eqref{it:hit}. As $M_2 \succeq M_1$ and both are $\mu$-reversible, \cite[Theorem $3.1$]{HM18} gives the desired result on the Laplace transform order of hitting time.


Next, we prove item \eqref{it:tmix}. First, we fix $A \subseteq \mathcal{X}$ such that $\mu(A) \geq 1/4$, and by item \eqref{it:hit}, we have
$$\mathbb{E}_{\mu}(\tau_A(M_2)) \leq \mathbb{E}_{\mu}(\tau_A(M_1)) \leq \sup_x  \mathbb{E}_{x}(\tau_A(M_1)) \leq  \sup_{x,A: \mu(A) \geq 1/4}  \mathbb{E}_{x}(\tau_A(M_1)).$$
By \cite[Theorem $1.3$]{Oliveria12}, there exists universal constant $C^{(1)}$ such that $\sup_{x,A: \mu(A) \geq 1/4}  \mathbb{E}_{x}(\tau_A(M_1)) \leq C^{(1)} t_{mix}(M_1,\mu,1/4).$ On the other hand, let $x^* := \arg \max \mathbb{E}_{x}(\tau_A(M_2))$ and $\mu_{min} := \min_x \mu(x)$, we then have
$$\mu(x^*) \mathbb{E}_{x^*}(\tau_A(M_2)) \leq \mathbb{E}_{\mu}(\tau_A(M_2)) \leq C^{(1)} t_{mix}(M_1,\mu,1/4),$$
which becomes
$$\sup_{x,A: \mu(A) \geq 1/4}  \mathbb{E}_{x}(\tau_A(M_2)) \leq \dfrac{C^{(1)}}{\mu_{min}} t_{mix}(M_1,\mu,1/4).$$
Using again \cite[Theorem $1.3$]{Oliveria12}, there exists universal constant $C^{(2)}$ such that $$\sup_{x,A: \mu(A) \geq 1/4}  \mathbb{E}_{x}(\tau_A(M_2)) \geq C^{(2)} t_{mix}(M_2,\mu,1/4).$$
Desired result follows from taking $C_{\mu} = \frac{C^{(1)}}{C^{(2)}\mu_{min}}$.

Now, we prove item \eqref{it:l2}. Using the definition of spectral gap and Lemma \ref{lem:M1M2}, we see that
\begin{align*}
\lambda_2(M_2,\mu) &= \inf\{\langle -M_2f,f \rangle_{\mu}:\ \mu(f) = 0, \mu(f^2) = 1\} \\
&\geq \inf\{\langle -M_1f,f \rangle_{\mu}:\ \mu(f) = 0, \mu(f^2) = 1\} \\
&= \lambda_2(M_1,\mu).
\end{align*}
From \cite[Chapter 9]{Chenb92}, this leads to
\begin{align*}
\sup_{||f||_{\ell^2(\mu)} \leq 1} ||e^{M_2t}f - \mu(f)||_{\ell^2(\mu)} = e^{-\lambda_2(M_2,\mu)t} &\leq e^{-\lambda_2(M_1,\mu)t} = \sup_{||f||_{\ell^2(\mu)} \leq 1} ||e^{M_1t}f - \mu(f)||_{\ell^2(\mu)}.\\
\end{align*}

For item \eqref{it:asymvar}, it readily follows from \cite[Theorem $6$]{LM08}. We proceed to prove item \eqref{it:larde}. Denote $R=M_2-M_1$. It is easy to see that $R$ is also a $\mu$-reversible generator. Since $M_1$, $M_2$ and $R$ are $\mu$-reversible generators, from \cite[Theorem IV.14]{Ho00},
\begin{align*}
I(M_2,\nu)&=-\sum_{x,y}\sqrt{\frac{\nu(x)}{\mu(x)}}\mu(x)M_2(x,y)\sqrt{\frac{\nu(y)}{\mu(y)}}\\
&=-\sum_{x,y}\sqrt{\frac{\nu(x)}{\mu(x)}}\mu(x)\Big(M_1(x,y)+R(x,y)\Big)\sqrt{\frac{\nu(y)}{\mu(y)}}\\
&\geq I(M_1,\nu), \quad \text{for }\nu\in \mathcal{P}(\mathcal{X}).
\end{align*}

Finally, we prove item \eqref{it:cap}. We use again Lemma \ref{lem:M1M2} and the Dirichlet principle for capacity of reversible Markov chains in \cite[Chapter 2, Theorem 6.1]{Liggb85} to give
\begin{align*}
\mathrm{cap}(A,B,M_1,\mu) &= \inf\big\{\langle -M_1f,f \rangle_{\mu}:\  f|_{A} = 1, f|_{B} = 0\big\} \\
&\leq \inf\big\{\langle -M_2f,f \rangle_{\mu}:\ f|_{A} = 1, f|_{B} = 0\big\} \\
&= \mathrm{cap}(A,B,M_2,\mu).
\end{align*}
In particular, taking $A = \{x\}$ and $B = \{y\}$, we have
$$\mu(x)\mathbb{P}_x(\tau_y(M_1) < \tau_x^+(M_1)) \leq \mu(x)\mathbb{P}_x(\tau_y(M_2) < \tau_x^+(M_2)).$$
Together with $|M_1(x,x)| \leq |M_2(x,x)|$ and \cite[Chapter $2$ Corollary $8$]{AF14}, desired result follows since
\begin{align*}
t_{com}^{x,y}(M_2) &= \dfrac{1}{|M_2(x,x)|\mu(x)\mathbb{P}_x(\tau_y(M_2) < \tau_x^+(M_2))}\\
&\leq \dfrac{1}{|M_1(x,x)|\mu(x)\mathbb{P}_x(\tau_y(M_1) < \tau_x^+(M_1)) }\\
&= t_{com}^{x,y}(M_1).
\end{align*}
\quad $\square$

\subsection{Proof of Theorem \ref{thm:compareM1M2additive}}
Since $M_1,\ \bar{Q},\ M_2$ are $\pi$-reversible and $M_2\succeq \bar{Q}\succeq M_1$, the proof of the results for them is omitted as it is essentially the same as the proof of Theorem \ref{thm:compareM1QmuM2Qmu} with $\mu$ substituted by $\pi$. It remains to prove the results for chain $Q$.

For hitting times, from \cite[Theorem 3.3]{HM18} it follows that
$$
\mathbb{E}_{\pi}(e^{-\lambda \tau_A(\bar{Q})}) \leq \mathbb{E}_{\pi}(e^{-\lambda \tau_A(Q)})\quad \text{and}\quad \mathbb{E}_\pi(\tau_A(\bar{Q}))\geq \mathbb{E}_\pi(\tau_A(Q)),
$$
for any $A\subseteq\mathcal{X}$. Hence, item \eqref{hit} holds. Next, we prove item \eqref{asva}. In fact, applying \cite[Theorem 4.3]{HM18.2} to the continuous-time case, i.e., replacing $I-P$ by $Q$ in its proof, gives that
$$
\sigma^2(h,\bar{Q},\pi) \geq \sigma^2(h,Q,\pi),\quad \text{for}\ h\in \ell^2_0(\pi).
$$

For large deviations, \cite[Proposition 3.2]{Bie16} gives the desired result. Finally, we use the Dirichlet principle of capacity in \cite{GL14} to prove item \eqref{capa}. More specifically, from \cite[Lemma 3.2]{GL14}, we can see that

\begin{align*}
\text{cap}(A,B,Q,\pi)&=\inf_{f|_A=1,f|_B=0}\sup_{g|_A,g|_B \text{constants}} \big\{2\langle Q^*f,g \rangle_\pi-\langle g,(-\bar{Q})g\rangle_\pi\big\} \\
&\geq \inf_{f|_A=1,f|_B=0}\langle f, (-\bar{Q})f\rangle_\pi\\
&=\text{cap}(A,B,\bar{Q},\pi),
\end{align*}
where we take $g=-f$ in the inequality.
\quad $\square$

\section{Examples}\label{sec:ex}

In this section, two examples are provided to illustrate our main results. In the first example in Section \ref{sec:MIS}, we give an eigenanalysis for the case of Metropolised independing sampling, while in the second example in Section \ref{sec:bd}, we consider the case when $Q$ is a birth-death chain and compare the fastest strong stationary time of $M_1$ and $M_2$.

\subsection{Metropolised independent sampling and spectral analysis of $M_2$}\label{sec:MIS}
In this section, we offer an explicit spectral analysis for both $M_1$ and $M_2$ of Metroplised independent sampling on a finite state space $\mathcal{X} = \llbracket 1,m \rrbracket = \{1,2,\ldots,m\}$ with $m \in \mathbb{N}$. This section is inspired by the work of \cite{Liu96} who offered the first explicit eigenanalysis of $M_1$ for Metropolised independent sampling. We will show that similar results can be obtained for $M_2$ using the techniques therein. Suppose that $\mathbf{p} = (p_y)_{y \in \mathcal{X}}$ is a probability distribution on $\mathcal{X}$, and denote by $P = (P(x,y))_{x,y \in \mathcal{X}}$ with $P(x,y) = p_y$ to be a transition matrix of the form
$$P = \begin{bmatrix}
p_1 & p_2 & \ldots & p_m  \\
\vdots & \vdots & \vdots & \vdots \\
p_1 & p_2 & \ldots & p_m
\end{bmatrix}.$$
In addition, we take the proposal chain to be the continuized chain of $P$ with generator $Q := P - I$, where $I$ is the identity matrix of size $m \times m$. For a given target distribution $\mu = (\mu(x))_{x \in \mathcal{X}}$, we define
$$w_x := \dfrac{\mu(x)}{p_x}$$
and assume without loss of generality (by relabelling the state space) that $w_1 \geq w_2 \geq \ldots \geq w_m$. As a result, both $M_1(Q,\mu)$ and $M_2(Q,\mu)$ take the form
\begin{align*}
M_1(Q,\mu) &= \begin{bmatrix}
p_1 + \gamma_1-1 & \frac{\mu(2)}{w_1} & \frac{\mu(3)}{w_1} & \ldots & \frac{\mu(m)}{w_1}  \\
p_1 & p_2 + \gamma_2 - 1 & \frac{\mu(3)}{w_2} & \ldots & \frac{\mu(m)}{w_2} \\
p_1 & p_2 & p_3 + \gamma_3 - 1 & \ldots & \frac{\mu(m)}{w_3} \\
\vdots & \vdots & \vdots & \vdots & \vdots \\
p_1 & p_2 & p_3 & \ldots & p_m - 1
\end{bmatrix}, \\
M_2(Q,\mu) &= \begin{bmatrix}
p_1-1 & p_2 & p_3 & \ldots & p_m  \\
\frac{\mu(1)}{w_2} & p_2 + \beta_2 - 1 & p_3 & \ldots & p_m \\
\frac{\mu(1)}{w_3} & \frac{\mu(2)}{w_3} & p_3 + \beta_3 - 1 & \ldots & p_m \\
\vdots & \vdots & \vdots & \vdots & \vdots \\
\frac{\mu(1)}{w_m} & \frac{\mu(2)}{w_m} & \frac{\mu(3)}{w_m} & \ldots & p_m + \beta_m - 1
\end{bmatrix},
\end{align*}
where for $x \in \llbracket 1,m-1 \rrbracket$ and $i \in \llbracket 2,m \rrbracket$,
$$\gamma_x := \sum_{j = x}^{m} \dfrac{\mu(j)}{w_j} - \dfrac{\mu(j)}{w_x} \geq 0, \quad \beta_i := \sum_{j=1}^i \dfrac{\mu(j)}{w_j} - \dfrac{\mu(j)}{w_i} \leq 0.$$

In our result below, we show that $(\beta_i-1)_{i \in \llbracket 2,m \rrbracket}$ (resp.~$(\gamma_x-1)_{x \in \llbracket 1,m-1 \rrbracket}$) are the eigenvalues of $M_2(Q,\mu)$ (resp.~$M_1(Q,\mu)$).

\begin{proposition}[Eigenanalysis of $M_1$ and $M_2$ for Metropolised independent sampling]\label{prop:eigenM1M2}
	Given a target distribution $\mu$ on $\mathcal{X} = \llbracket 1,m \rrbracket$ and proposal chain with generator $Q = P - I$, the non-zero eigenvalues-eigenvectors of $M_2(Q,\mu)$ are $(\beta_i-1,\mathbf{v}_i)_{i \in \llbracket 2,m \rrbracket}$, while that of $M_1(Q,\mu)$ are $(\gamma_x-1,\mathbf{w}_x)_{x \in \llbracket 1,m-1 \rrbracket}$,
	where
	\begin{align*}
		\mathbf{v}_i &= \bigg(-\mu(i),-\mu(i),\ldots,-\mu(i),\underbrace{\sum_{j \leq i-1} \mu(j)}_{\text{i}^{th} position},0,\ldots,0 \bigg)^T, \\
		\mathbf{w}_x &= \bigg(0,0,\ldots,0,\underbrace{\sum_{j \geq x+1} \mu(j)}_{\text{x}^{th} position},-\mu(x),\ldots,-\mu(x) \bigg)^T.
	\end{align*}
\end{proposition}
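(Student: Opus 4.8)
The plan is to verify the claimed eigenpairs by direct matrix--vector multiplication and then argue that they exhaust the nonzero spectrum. First I would record explicit entry formulas exploiting the independence structure $Q = P - I$, so that $Q(x,y) = p_y$ and $Q(y,x) = p_x$ for $x \neq y$. Writing $p_x = \mu(x)/w_x$, a short computation using $w_1 \geq \cdots \geq w_m$ gives the off-diagonal entries
$$
M_2(x,y) = \frac{\mu(y)}{w_{\max\{x,y\}}}, \qquad M_1(x,y) = \frac{\mu(y)}{w_{\min\{x,y\}}}, \qquad x \neq y.
$$
Combined with conservativity (each row sums to zero) and the normalisation $\sum_j p_j = 1$, the stated diagonal entries $p_x + \beta_x - 1$ and $p_x + \gamma_x - 1$ follow. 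In particular I would isolate the identity
$$
M_2(x,x) = p_x + \beta_x - 1 = -\sum_{j > x} p_j - \frac{1}{w_x}\sum_{j < x}\mu(j),
$$
which is the algebraic fact that makes the verification below telescope.

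The core step is to check $M_2 \mathbf{v}_i = (\beta_i - 1)\mathbf{v}_i$ by evaluating $(M_2 \mathbf{v}_i)(x) = \sum_y M_2(x,y)\mathbf{v}_i(y)$ row by row, splitting into the three regimes $x > i$, $x = i$ and $x < i$ dictated by the block structure of $\mathbf{v}_i$ (constant $-\mu(i)$ on $\{1,\dots,i-1\}$, the value $S_{i-1} := \sum_{j < i}\mu(j)$ at position $i$, and $0$ beyond). For $x > i$ every relevant entry equals $\mu(y)/w_x$, so the row collapses to $\tfrac{\mu(i)}{w_x}\big(S_{i-1} - \sum_{y < i}\mu(y)\big) = 0$, matching $\mathbf{v}_i(x) = 0$. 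For $x = i$ the entries are $\mu(y)/w_i$ and the relation $\mu(i)/w_i = p_i$ reduces the row to $(\beta_i - 1)S_{i-1}$. The delicate case is $x < i$: here the row mixes $\mu(y)/w_x$ for $y < x$, the diagonal term, and $p_y$ for $x < y < i$, together with the contribution $p_i S_{i-1}$ from the coordinate $y = i$. I would substitute the displayed diagonal identity to collapse the $-\mu(i)$-block contribution to $\mu(i)\sum_{j \geq i}p_j$, and then the identity $1 - \beta_i = \sum_{j \geq i}p_j + S_{i-1}/w_i$ yields exactly $(\beta_i - 1)(-\mu(i))$. I expect this $x < i$ computation to be the main obstacle, since it is the only regime where all three types of off-diagonal entry and the diagonal conspire.

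The verification $M_1 \mathbf{w}_x = (\gamma_x - 1)\mathbf{w}_x$ then follows by the identical argument after reversing the order of the state space (equivalently, swapping $\min$ with $\max$, $\beta$ with $\gamma$, and the left and right constant blocks), so I would merely indicate the symmetry rather than repeat the computation. Finally, to see that $(\beta_i - 1)_{i \in \llbracket 2,m\rrbracket}$ (resp.\ $(\gamma_x - 1)_{x \in \llbracket 1,m-1\rrbracket}$) exhaust the nonzero spectrum, I would note that $M_2$ is $\mu$-reversible by Lemma \ref{lem:M1M2}, hence self-adjoint on $\ell^2(\mu)$ and diagonalisable with real eigenvalues, and irreducible because $M_2(x,y) \geq Q(x,y) = p_y > 0$ for all $x \neq y$; thus its zero eigenvalue is simple with constant eigenvector $\1$. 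The vectors $\mathbf{v}_2,\dots,\mathbf{v}_m$ are $\mu$-orthogonal to $\1$ and linearly independent, since $\mathbf{v}_i$ has its last nonzero coordinate at position $i$ (a triangular leading-term argument); as $\beta_i - 1 = M_2(i,i) - p_i \leq -p_i < 0$, these $m-1$ independent eigenvectors account for every nonzero eigenvalue, which completes the proof.
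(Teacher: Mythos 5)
Your proposal is correct, and it takes a genuinely different route from the paper. The paper follows Liu's rank-one perturbation technique: it writes $M_2(Q,\mu)+I = G + \mathbf{1}\mathbf{p}$, where $G := M_2(Q,\mu)-Q$ is lower triangular with diagonal $(0,\beta_2,\ldots,\beta_m)$, so the candidate eigenvalues are read off for free; since $\mathbf{1}\mathbf{p}$ has rank one and $\mathbf{1}$ is a common eigenvector, the remaining eigenvalues of $M_2+I$ must coincide with those of $G$, and the eigenvectors $\mathbf{v}_i$ are then obtained by computing eigenvectors $\mathbf{u}_i$ of $G$ and correcting by the right multiple of $\mathbf{1}$ (solving $t = (t-\mu(i)(1-\beta_i))/\beta_i$). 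You instead verify $M_2\mathbf{v}_i = (\beta_i-1)\mathbf{v}_i$ by brute-force row evaluation in the three regimes $x>i$, $x=i$, $x<i$ (I checked the $x<i$ case, which you correctly flag as the delicate one: the bracket collapses to $-\sum_{j\ge i}p_j$ and the identity $1-\beta_i = \sum_{j\ge i}p_j + S_{i-1}/w_i$ together with $\mu(i)/w_i=p_i$ closes it), and then supply a completeness argument via $\mu$-reversibility (diagonalisability), irreducibility (simplicity of the zero eigenvalue), negativity of $\beta_i-1$, and the triangular linear-independence of $\mathbf{v}_2,\ldots,\mathbf{v}_m$. What each approach buys: the paper's argument explains \emph{where} the eigenvalues come from (the diagonal of a triangular matrix) and handles exhaustiveness of the spectrum almost automatically through the rank-one transfer, at the cost of invoking that slightly nonobvious spectral fact; your argument is entirely elementary and self-contained but presupposes the eigenvectors and must argue completeness separately --- which you do correctly. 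Your treatment of $M_1$ by the order-reversal symmetry matches the paper's own remark (the paper likewise only proves the $M_2$ case, citing Liu for $M_1$).
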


\begin{rk}
	In fact, the spectral information of $M_1(Q,\mu)$ can be obtained from $M_2(Q,\mu)$ by reordering the index in the way of changing $i$ to $m-i+1$ and replacing $\beta$ by $\gamma$. 
\end{rk}

\begin{rk}
	As explicit eigenvalues information are available for $M_2(Q,\mu)$, by means of Theorem \ref{thm:compareM1QmuM2Qmu} item \eqref{it:l2} we have
	$$e^{(\max_{i \in \llbracket 2,m \rrbracket} \beta_i-1)t} = \sup_{||f||_{\ell^2(\mu)} \leq 1} ||e^{M_2t}f - \mu(f)||_{\ell^2(\mu)} \leq \sup_{||f||_{\ell^2(\mu)} \leq 1} ||e^{M_1t}f - \mu(f)||_{\ell^2(\mu)} = e^{(\gamma_1-1)t}.$$
\end{rk}

\subsection{Proof of Proposition \ref{prop:eigenM1M2}}

In this section, we prove Proposition \ref{prop:eigenM1M2} for $M_2$ by adapting similar techniques as in \cite{Liu96}. The case for $M_1$ has already been done in \cite[Theorem $2.1$]{Liu96}. First, we denote $\mathbf{1} := (1,1,\ldots,1)^T$ to be the column vector of all ones and $G$ by
$$G := M_2(Q,\mu) -Q = \begin{bmatrix}
0 & 0 & 0 & \ldots & 0  \\
\frac{\mu(1)}{w_2}-p_1 & \beta_2 & 0 & \ldots & 0 \\
\frac{\mu(1)}{w_3}-p_1 & \frac{\mu(2)}{w_3}-p_2 & \beta_3 & \ldots & 0 \\
\vdots & \vdots & \vdots & \vdots & \vdots \\
\frac{\mu(1)}{w_m}-p_1 & \frac{\mu(2)}{w_m}-p_2 & \frac{\mu(3)}{w_m}-p_3 & \ldots & \beta_m
\end{bmatrix}.$$
Note that $\mathbf{1}$ is a common right eigenvector of both $M_2(Q,\mu) + I$ and $M_2(Q,\mu) + I - G = P$ with eigenvalue $1$. Since $M_2(Q,\mu) + I - G = P$ is of rank one, the rest of the eigenvalues of $M_2(Q,\mu) + I$ and $G$ have to be the same, and hence the non-zero eigenvalues for $M_2(Q,\mu)$ are $(\beta_i-1)_{i \in \llbracket 2,m \rrbracket}$. To determine the eigenvectors, we begin with the eigenvectors for $G$.

\begin{lemma}
	For $i \in \llbracket 2,m \rrbracket$,
	$$\mathbf{u}_i = (\mathbf{u}_i(x))_{x \in \llbracket 1,m \rrbracket} =  \bigg(0,0,\ldots,0,\underbrace{\sum_{j \leq i} \mu(j)}_{\text{i}^{th} position},\mu(i),\ldots,\mu(i) \bigg)^T$$
	is an eigenvector associated with eigenvalue $\beta_i$ of $G$.
\end{lemma}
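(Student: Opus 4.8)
The plan is to verify the eigenvalue equation $G\mathbf{u}_i = \beta_i \mathbf{u}_i$ coordinate by coordinate, exploiting the lower-triangular structure of $G$. The single most useful simplification is the identity $\mu(j)/w_j = p_j$, which is immediate from the definition $w_j = \mu(j)/p_j$; substituting it throughout turns every occurrence of $\mu(j)/w_j$ into $p_j$ and, in particular, rewrites the diagonal entries as $\beta_i = \sum_{j=1}^i p_j - S_i/w_i$, where I abbreviate $S_i := \sum_{j \leq i}\mu(j)$. Since $w_i = \mu(i)/p_i$, the subtracted term reads $S_i/w_i = p_i S_i/\mu(i)$, so the target value in coordinate $k > i$ is $\beta_i \mathbf{u}_i(k) = \beta_i \mu(i) = \mu(i)\sum_{j=1}^i p_j - p_i S_i$.

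Because $G$ is lower triangular, I have $(G\mathbf{u}_i)(k) = \sum_{j=1}^k G(k,j)\mathbf{u}_i(j)$, with $G(k,j) = \mu(j)/w_k - p_j$ for $j < k$, diagonal $G(k,k) = \beta_k$ for $k \geq 2$, and a vanishing first row. I would then split into three cases according to the position of $k$ relative to $i$. For $k < i$ every summand vanishes since $\mathbf{u}_i(j) = 0$ for $j \leq k < i$, matching $\beta_i \mathbf{u}_i(k) = 0$. For $k = i$ only the diagonal term survives, giving $\beta_i S_i = \beta_i \mathbf{u}_i(i)$ directly.

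The only case requiring genuine computation, and hence the main obstacle, is $k > i$. Here three kinds of terms contribute: the $j = i$ term carrying weight $\mathbf{u}_i(i) = S_i$, the off-diagonal terms for $i < j < k$ carrying weight $\mathbf{u}_i(j) = \mu(i)$, and the diagonal term $\beta_k \mu(i)$. I would organize the computation by first collecting all coefficients of $1/w_k$: using the telescoping identity $S_i + (S_{k-1} - S_i) - S_k = -\mu(k)$ together with $\mu(k)/w_k = p_k$, these collapse to $-\mu(i)\mu(k)/w_k = -\mu(i)p_k$. The remaining $w_k$-free terms simplify, after the cancellation $\sum_{j=1}^k p_j - \sum_{j=i+1}^{k-1} p_j = \sum_{j=1}^i p_j + p_k$, to $-p_i S_i + \mu(i)\sum_{j=1}^i p_j + \mu(i)p_k$. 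The two $\mu(i)p_k$ contributions then cancel, leaving exactly $\mu(i)\sum_{j=1}^i p_j - p_i S_i = \beta_i \mu(i) = \beta_i \mathbf{u}_i(k)$. This confirms the eigenvalue equation in every coordinate and finishes the proof; I expect the bookkeeping of the $1/w_k$ telescoping and the $p_k$ cancellation to be the only delicate points, and the identity $\mu(j)/w_j = p_j$ is what makes both of them go through cleanly.
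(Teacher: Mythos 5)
Your proof is correct and takes essentially the same route as the paper's: a coordinate-by-coordinate verification of $G\mathbf{u}_i=\beta_i\mathbf{u}_i$ split into the cases $k<i$, $k=i$, $k>i$, with the $k>i$ case handled by exactly the telescoping of the $1/w_k$ terms and the $p_k$ cancellation you describe. The paper merely writes that last computation more tersely, using $\mu(j)/w_j=p_j$ implicitly.
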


\begin{proof}
	We consider the $k$-th entry of the vector $G\mathbf{u}_i$, denoted by $G\mathbf{u}_i(k)$, for the following three cases $k < i$, $k = i$ and $k > i$. In the first case when $k < i$,
	$$G\mathbf{u}_i(k) = 0 = \beta_i \mathbf{u}_i(k).$$	
	In the second case when $k = i$, we have
	$$G\mathbf{u}_i(k) = \beta_i \sum_{j \leq i} \mu(j) = \beta_i \mathbf{u}_i(k).$$	
	Finally, in the last case when $k > i$, we check that
	\begin{align*}
	G\mathbf{u}_i(k) &= G(k,i) \sum_{j \leq i} \mu(j) + \mu(i) \sum_{j = i+1}^k G(k,j)\\
	&= \left(\dfrac{\mu(i)}{w_k} - p_i\right) \sum_{j \leq i} \mu(j) + \mu(i) \sum_{j = i+1}^{k-1} \left( \dfrac{\mu(j)}{w_k} - \dfrac{\mu(j)}{w_j} \right) + \mu(i) \beta_k \\
	&= \mu(i)\left(\sum_{j=1}^i \dfrac{\mu(j)}{w_j} - \dfrac{\mu(j)}{w_i}\right) = \beta_i \mu(i) = \beta_i \mathbf{u}_i(k).
	\end{align*}
\end{proof}

We now proceed to show that $\mathbf{v}_i = \mathbf{u}_i - \mu(i) \mathbf{1}$ is an eigenvector. First, we note that
$$\mathbf{p} \mathbf{u}_i = p_i \sum_{j \leq i} \mu(j) + \mu(i) \sum_{j=k+1}^m p_j = \mu(i) (1-\beta_i),$$
and so
$$\left(M_2(Q,\mu) + I \right)\mathbf{u}_i = G \mathbf{u}_i + \mathbf{1} \mathbf{p} \mathbf{u}_i = \beta_i \mathbf{u}_i + \mu(i) (1-\beta_i) \mathbf{1}.$$
Since $\mathbf{1}$ is a right eigenvector of $M_2(Q,\mu) + I$ with eigenvalue $1$, for any $t$ we have
$$\left(M_2(Q,\mu) + I\right)(\mathbf{u}_i - t \mathbf{1}) = \beta_i \left(\mathbf{u}_i - \dfrac{t - \mu(i)(1-\beta_i)}{\beta_i}\mathbf{1}\right).$$
Solving for $t = \dfrac{t - \mu(i)(1-\beta_i)}{\beta_i}$ gives $\mathbf{v}_i = \mathbf{u}_i - \mu(i) \mathbf{1}$ is an eigenvector.

\subsection{Comparing the fastest strong stationary time of birth-death Metropolis chains $M_1$ and $M_2$}\label{sec:bd}

In our second example, we consider the case when the state space is $\mathcal{X} = \llbracket 0,n \rrbracket$ and the proposal chain $Q$ is an ergodic birth-death chain, that is, $Q(x,y) > 0$ if and only if $|y-x| = 1$. In this setting, it is easy to see that both $M_1(Q,\mu)$ and $M_2(Q,\mu)$ are birth-death chains. For this example we are primarily interested in the so-called fastest strong stationary time $T_{sst}(Q)$ starting from $0$, which is a randomized stopping time that satisfies, for $t > 0$,
$$\max_{y \in \mathcal{X}} \bigg\{1- \dfrac{e^{Qt}(0,y)}{\pi(y)}\bigg\} = \mathbb{P}_0(T_{sst}(Q) > t).$$
As $Q$ is an birth-death chain, classical results (see e.g. \cite[Corollary $4.2$]{DSC06} or \cite[Theorem $1.4$]{Fill}) tell us that $T_{sst}(Q)$ under $\mathbb{P}_0$ has the law of convolution of exponential distributions with parameters being the non-zero eigenvalues of $-Q$. This fact, together with the Courant-Fischer min-max theorem of eigenvalues, give rise to the following comparison results:

\begin{proposition}[Fastest strong stationary time of birth-death Metropolis chains $M_1$ and $M_2$]
	Given a target distribution $\mu$ on finite state space $\mathcal{X}$ and birth-death proposal chain with generator $Q$, by writing the eigenvalues of $-M_1 = -M_1(Q,\mu)$ and $-M_2 = -M_2(Q,\mu)$ in ascending order for $j = 1,2$
	$0 = \lambda_1(M_j) \leq \lambda_2(M_j) \ldots \leq \lambda_{|\mathcal{X}|}(M_j),$
	we have, for $i \in \llbracket 1,|\mathcal{X}| \rrbracket$,
	$$\lambda_i(M_1) \leq \lambda_i(M_2).$$
	Consequently, there is a Laplace transform order of the fastest strong stationary time starting at $0$ between $M_1$ and $M_2$, that is, for $\alpha > 0$,
	$$\mathbb{E}_0(e^{-\alpha T_{sst}(M_2)}) \geq \mathbb{E}_0(e^{-\alpha T_{sst}(M_1)}).$$
	In particular, the mean and variance of the fastest strong stationary time are ordered by
	\begin{align*}
		\mathbb{E}_0(T_{sst}(M_2)) &\leq \mathbb{E}_0(T_{sst}(M_1)), \\
		\mathrm{Var}_0(T_{sst}(M_2)) &\leq \mathrm{Var}_0(T_{sst}(M_1)).
	\end{align*}
\end{proposition}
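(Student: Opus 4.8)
The plan is to reduce everything to the eigenvalue comparison $\lambda_i(M_1)\leq\lambda_i(M_2)$ and then transport that ordering to $T_{sst}$ through the convolution-of-exponentials representation. First I would record the structural preliminaries. Since $Q$ is an ergodic birth-death chain, both $Q(x,y)$ and $Q(y,x)$ are strictly positive whenever $|x-y|=1$, so the off-diagonal entries $M_1(x,y)=\min\{Q(x,y),\tfrac{\mu(y)}{\mu(x)}Q(y,x)\}$ and $M_2(x,y)=\max\{\cdots\}$ stay strictly positive exactly on nearest neighbours; hence $M_1$ and $M_2$ are themselves irreducible birth-death generators, and each $-M_j$ has a simple zero eigenvalue together with $|\mathcal{X}|-1$ strictly positive ones. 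Because $M_1$ and $M_2$ are $\mu$-reversible by Lemma \ref{lem:M1M2}\eqref{it:1}, the operators $-M_1$ and $-M_2$ are self-adjoint and positive semidefinite on $\ell^2(\mu)$, so the Courant-Fischer min-max theorem is available.

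The key step is the eigenvalue comparison, which follows purely from quadratic-form domination. By Lemma \ref{lem:M1M2}\eqref{it:3} we have $\langle -M_2 f,f\rangle_\mu\geq\langle -M_1 f,f\rangle_\mu$ for every $f$, i.e. the Rayleigh quotient of $-M_2$ pointwise dominates that of $-M_1$. Writing
$$\lambda_i(M_j)=\min_{\dim V=i}\ \max_{0\neq f\in V}\ \frac{\langle -M_j f,f\rangle_\mu}{\langle f,f\rangle_\mu},$$
and letting $V^\ast$ be an $i$-dimensional subspace attaining the outer minimum for $M_2$, the pointwise domination gives
$$\lambda_i(M_2)=\max_{0\neq f\in V^\ast}\frac{\langle -M_2 f,f\rangle_\mu}{\langle f,f\rangle_\mu}\geq\max_{0\neq f\in V^\ast}\frac{\langle -M_1 f,f\rangle_\mu}{\langle f,f\rangle_\mu}\geq\lambda_i(M_1),$$
which is the asserted ordering. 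I note that this argument uses only the form domination and not the birth-death structure.

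Finally I would invoke the cited classical fact that for a birth-death generator, $T_{sst}$ started at $0$ is distributed as an independent sum of exponentials whose rates are the non-zero eigenvalues of $-M_j$. This yields
$$\mathbb{E}_0(e^{-\alpha T_{sst}(M_j)})=\prod_{i=2}^{|\mathcal{X}|}\frac{\lambda_i(M_j)}{\lambda_i(M_j)+\alpha},$$
and since $\lambda\mapsto\lambda/(\lambda+\alpha)$ is strictly increasing on $(0,\infty)$ for each $\alpha>0$, the factorwise inequality $\lambda_i(M_2)\geq\lambda_i(M_1)$ delivers the Laplace-transform order. The moment orderings then drop out of the same representation via $\mathbb{E}_0(T_{sst}(M_j))=\sum_{i\geq 2}1/\lambda_i(M_j)$ and $\mathrm{Var}_0(T_{sst}(M_j))=\sum_{i\geq 2}1/\lambda_i(M_j)^2$. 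I expect no serious obstacle; the only points requiring care are verifying irreducibility of $M_1,M_2$ so that exactly one eigenvalue vanishes, and making explicit that the convolution-of-exponentials representation is precisely where the birth-death hypothesis (as opposed to mere form domination) is genuinely used.
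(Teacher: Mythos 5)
Your proposal is correct and follows essentially the same route as the paper: quadratic-form domination from Lemma \ref{lem:M1M2}, Courant--Fischer to order the eigenvalues, and the convolution-of-exponentials representation of $T_{sst}$ for birth-death chains to transfer the ordering to the Laplace transform and moments. The only difference is that you spell out the min-max argument and the irreducibility check in more detail than the paper does, which is a harmless (and arguably welcome) elaboration.
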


\begin{proof}
	First, by Lemma \ref{lem:M1M2} we have $\langle -M_2 f,f \rangle_{\mu} \geq \langle -M_1 f,f \rangle_{\mu}$. By the Courant-Fischer min-max theorem of eigenvalues \cite[Theorem $4.2.6$]{HJ13}, this leads to
	$$\lambda_i(M_1) \leq \lambda_i(M_2).$$
	Consequently, for $\alpha > 0$,
	$$\dfrac{\lambda_i(M_1)}{\lambda_i(M_1) + \alpha} \leq \dfrac{\lambda_i(M_2)}{\lambda_i(M_2) + \alpha}.$$
	Taking the product yields
	$$\mathbb{E}_0(e^{-\alpha T_{sst}(M_2)}) = \prod_{i=2}^{|\mathcal{X}|} \dfrac{\lambda_i(M_2)}{\lambda_i(M_2) + \alpha} \geq \prod_{i=2}^{|\mathcal{X}|} \dfrac{\lambda_i(M_1)}{\lambda_i(M_1) + \alpha} = \mathbb{E}_0(e^{-\alpha T_{sst}(M_1)}),$$
	where the equalities follow from the fact that both $M_1$ and $M_2$ are birth-death chains and so their fastest strong stationary times are distributed as convolution of exponential distributions with parameters being the non-zero eigenvalues. In particular, we have
	\begin{align*}
	\mathbb{E}_0(T_{sst}(M_2)) = \sum_{i=2}^{|\mathcal{X}|} \dfrac{1}{\lambda_i(M_2)} &\leq \sum_{i=2}^{|\mathcal{X}|} \dfrac{1}{\lambda_i(M_1)} = \mathbb{E}_0(T_{sst}(M_1)), \\
	\mathrm{Var}_0(T_{sst}(M_2)) = \sum_{i=2}^{|\mathcal{X}|} \dfrac{1}{\lambda_i(M_2)^2} &\leq \sum_{i=2}^{|\mathcal{X}|} \dfrac{1}{\lambda_i(M_1)^2} = \mathrm{Var}_0(T_{sst}(M_1)).
	\end{align*}
\end{proof}

\noindent \textbf{Acknowledgements}.
The authors would like to thank the anonymous referee for constructive comments that improve the presentation of the manuscript. Michael Choi acknowledges the support from the Chinese University of Hong Kong, Shenzhen grant PF01001143. Lu-Jing Huang acknowledges the support from NSFC No.11771047 and Probability and Statistics: Theory and Application(IRTL1704). The authors would also like to thank Professor Yong-Hua Mao for his hospitality during their visit to Beijing Normal University, where this work was initiated.

\bibliographystyle{abbrvnat}
\bibliography{thesis}

\end{document}